\newtheorem{theorem}{{\sc Theorem}}[section]
\newtheorem{lemma}[theorem]{{\sc Lemma}}
\newtheorem{corollary}[theorem]{Corollary}
\newtheorem{remark}[theorem]{Remark}
\newcommand{\bb}[1]{\mathbb{ #1}}
\newcommand{\dOm}{\partial\Omega}
\newcommand{\bra}[1]{\overline{#1}}
\newcommand{\hf}{\displaystyle\frac{1}{2}}
\newcommand{\nth}[1]{\displaystyle\frac{1}{#1}}
\newcommand{\Grad}{\nabla}
\renewcommand{\Hat}[1]{\widehat{#1}}
\newcommand{\Tld}[1]{\widetilde{#1}}
\def\XXint#1#2#3{{\setbox0=\hbox{$#1{#2#3}{\int}$ }
\vcenter{\hbox{$#2#3$ }}\kern-.6\wd0}}
\newcommand{\re}{\Re\mathfrak{e}}
\newcommand{\mat}[4]{\left[\begin{array}{cc}
\displaystyle{#1}&\displaystyle{#2}\\[1ex]
\displaystyle{#3}&\displaystyle{#4}\end{array}\right]}
\newcommand{\bc}{boundary condition}
\newcommand{\rhs}{right-hand side}
\newcommand{\mc}{microstructure}
\newcommand{\Ga}{\alpha}
\newcommand{\Ge}{\epsilon}
\newcommand{\Gth}{\theta}
\newcommand{\Gs}{\sigma}
\newcommand{\Go}{\omega}
\newcommand{\Gz}{\zeta}
\newcommand{\GD}{\Delta}
\newcommand{\GO}{\Omega}
\bmdefine\BGa{\alpha}
\bmdefine\BGb{\beta}
\bmdefine\BGd{\delta}
\bmdefine\BGe{\epsilon}
\bmdefine\BGve{\varepsilon}
\bmdefine\BGf{\phi}
\bmdefine\BGvf{\varphi}
\bmdefine\BGg{\gamma}
\bmdefine\BGc{\chi}
\bmdefine\BGi{\iota}
\bmdefine\BGk{\kappa}
\bmdefine\BGl{\lambda}
\bmdefine\BGn{\eta}
\bmdefine\BGm{\mu}
\bmdefine\BGv{\nu}
\bmdefine\BGp{\pi}
\bmdefine\BGth{\theta}
\bmdefine\BGvth{\vartheta}
\bmdefine\BGr{\rho}
\bmdefine\BGvr{\varrho}
\bmdefine\BGs{\sigma}
\bmdefine\BGvs{\varsigma}
\bmdefine\BGt{\tau}
\bmdefine\BGj{\tau}
\bmdefine\BGu{\upsilon}
\bmdefine\BGo{\omega}
\bmdefine\BGx{\xi}
\bmdefine\BGy{\psi}
\bmdefine\BGz{\zeta}
\bmdefine\BGD{\Delta}
\bmdefine\BGF{\Phi}
\bmdefine\BGG{\Gamma}
\bmdefine\BGL{\Lambda}
\bmdefine\BGP{\Pi}
\bmdefine\BGT{\Theta}
\bmdefine\BGS{\Sigma}
\bmdefine\BGU{\Upsilon}
\bmdefine\BGO{\Omega}
\bmdefine\BGX{\Xi}
\bmdefine\BGY{\Psi}
\newcommand{\CC}{{\mathcal C}}
\newcommand{\CP}{{\mathcal P}}
\bmdefine\BCA{{\mathcal A}}
\bmdefine\BCB{{\mathcal B}}
\bmdefine\BCC{{\mathcal C}}
\bmdefine\BCD{{\mathcal D}}
\bmdefine\BCE{{\mathcal E}}
\bmdefine\BCF{{\mathcal F}}
\bmdefine\BCG{{\mathcal G}}
\bmdefine\BCH{{\mathcal H}}
\bmdefine\BCI{{\mathcal I}}
\bmdefine\BCJ{{\mathcal J}}
\bmdefine\BCK{{\mathcal K}}
\bmdefine\BCL{{\mathcal L}}
\bmdefine\BCM{{\mathcal M}}
\bmdefine\BCN{{\mathcal N}}
\bmdefine\BCO{{\mathcal O}}
\bmdefine\BCP{{\mathcal P}}
\bmdefine\BCQ{{\mathcal Q}}
\bmdefine\BCR{{\mathcal R}}
\bmdefine\BCS{{\mathcal S}}
\bmdefine\BCT{{\mathcal T}}
\bmdefine\BCU{{\mathcal U}}
\bmdefine\BCV{{\mathcal V}}
\bmdefine\BCW{{\mathcal W}}
\bmdefine\BCX{{\mathcal X}}
\bmdefine\BCY{{\mathcal Y}}
\bmdefine\BCZ{{\mathcal Z}}
\bmdefine\Bzr{ 0}
\bmdefine\Ba{ a}
\bmdefine\Bb{ b}
\bmdefine\Bc{ c}
\bmdefine\Bd{ d}
\bmdefine\Be{ e}
\bmdefine\Bf{ f}
\bmdefine\Bg{ g}
\bmdefine\Bh{ h}
\bmdefine\Bi{ i}
\bmdefine\Bj{ j}
\bmdefine\Bk{ k}
\bmdefine\Bl{ l}
\bmdefine\Bm{ m}
\bmdefine\Bn{ n}
\bmdefine\Bo{ o}
\bmdefine\Bp{ p}
\bmdefine\Bq{ q}
\bmdefine\Br{ r}
\bmdefine\Bs{ s}
\bmdefine\Bt{ t}
\bmdefine\Bu{ u}
\bmdefine\Bv{ v}
\bmdefine\Bw{ w}
\bmdefine\Bx{ x}
\bmdefine\By{ y}
\bmdefine\Bz{ z}
\bmdefine\BA{ A}
\bmdefine\BB{ B}
\bmdefine\BC{ C}
\bmdefine\BD{ D}
\bmdefine\BE{ E}
\bmdefine\BF{ F}
\bmdefine\BG{ G}
\bmdefine\BH{ H}
\bmdefine\BI{ I}
\bmdefine\BJ{ J}
\bmdefine\BK{ K}
\bmdefine\BL{ L}
\bmdefine\BM{ M}
\bmdefine\BN{ N}
\bmdefine\BO{ O}
\bmdefine\BP{ P}
\bmdefine\BQ{ Q}
\bmdefine\BR{ R}
\bmdefine\BS{ S}
\bmdefine\BT{ T}
\bmdefine\BU{ U}
\bmdefine\BV{ V}
\bmdefine\BW{ W}
\bmdefine\BX{ X}
\bmdefine\BY{ Y}
\bmdefine\BZ{ Z}
\title{Exact scaling exponents in Korn and Korn-type inequalities for cylindrical shells}
\author{Yury Grabovsky \and Davit Harutyunyan}
\begin{document}
\maketitle
\begin{abstract}
  Understanding asymptotics of gradient components in relation to the
  symmetrized gradient is important for the analysis of buckling of slender
  structures.  For circular cylindrical shells we obtain the exact scaling
  exponent of the Korn constant as a function of shell's thickness. Equally
  sharp results are obtained for individual components of the gradient in
  cylindrical coordinates. We also derive an analogue of the Kirchhoff ansatz,
  whose most prominent feature is its singular dependence on the slenderness
  parameter, in marked contrast with the classical case of plates and rods.
\end{abstract}


\section{Introduction}
\label{sec:intro}
Korn's inequalities \cite{korn08,korn09} play a central role in the theory of
linear and non-linear elasticity, and other areas of physics (see the review
\cite{horg95}). In the study of buckling of slender structures under
compression \cite{hold64,dasu06,grtr07} and in the larger study of safe loads
\cite{beat71,delp80,fffap10} of fundamental importance is the dependence of
the Korn constant on parameters of the problem.  With the
application to buckling in mind we study the scaling of the Korn constant as a
power of $h=t/R$, where $t$ is the wall thickness and $R$ is the outer radius
of the circular cylindrical shell. (See \cite{pato12,pato13} for the
application of this theory to rods and plates.) Traditionally, the Korn
inequality is proved either for the functions in the orthogonal complement to
the space of infinitesimal motions \cite{korn08,korn09,frie47} or for functions
vanishing on a portion of the boundary \cite{kool88}. However, for the study
of buckling and in other contexts one needs to examine the Korn constant for
more general spaces of functions that contain no infinitesimal motions
\cite{Kohn:1985:NMT,ryzh99,devi02,grtr07,lemu11}.

In order to obtain an asymptotically sharp estimate of the Korn constant one
needs to provide an ``ansatz'': a family of near-minimizers for the variational
definition of the Korn constant and then prove an ``ansatz-free'' inequality
establishing the sharpness of the ansatz. This program can be completed for
both linear and non-linear versions of the Korn inequality via a compactness
theorem for rods and plates \cite{fjm02}, justifying the Kirchhoff ansatz
\cite{kirch1850}. However, the compactness does not hold for cylindrical
shells and new approaches, including a new ansatz are needed. The ansatz
presented in this paper involves oscillations on a scale $h^{1/4}$,
intermediate between the macroscopic and the length scale $h$ of the shell
wall. Our method of proof of the ansatz-free bound reduces the first Korn
inequality for the circular cylindrical shell to 2D Korn-type inequalities
defined on the cylindrical coordinate ``plane'' cross-sections. These
Korn-type inequalities can be regarded to be a cross between the first and
second Korn inequalities \cite{oshyo09}. The proof uses the method of harmonic
projections from \cite{kool88}. The great flexibility of this method was
further explored in \cite{haru14} with the eventual goal of establishing
Korn's inequalities for other shells or for imperfect cylindrical shells,
needed for understanding the strong sensitivity to imperfections of the critical
buckling load of axially compressed circular cylindrical shells.

Another quantity called for by the buckling theory from \cite{grtr07} is the
Korn-like constants in the Korn-like inequalities for gradient
components. These inequalities have the form of the first Korn inequality but
with a specific component of the gradient matrix in place of the full
gradient. We show that, perhaps surprisingly, the Korn-like constants scale in
$h$ differently from the Korn constant. This phenomenon is a manifestation of
a high degree of symmetry in circular cylindrical shells. With this
understanding it is natural that our proof makes full use of that symmetry
through the periodicity and the transformation of the problem to the Fourier
space. We conjecture that the imperfections breaking the symmetry will also
destroy the distinct power laws in the Korn-like inequalities for gradient
components. We believe that this effect of imperfections is related to the large
discrepancy between the theoretical buckling load \cite{lorenz11,timosh14} and
the experimentally observed values \cite{call2000,lcp00}. These ideas are made
more precise in our companion paper \cite{grha}.

This paper is organized as follows.  In Section~\ref{sec:KI} we introduce Korn
and Korn-like constants and state our main results for the cylindrical shell. The
new oscillatory ansatz is also derived there. We reduce the ansatz-free Korn
inequality for the cylindrical shell to the 2D Korn-type inequalities in
Section~\ref{sec:red}. These inequalities are proved by means of the harmonic
projection method in Section~\ref{sec:2d}. In Section~\ref{sec:gradcomp} we
prove Korn-like inequalities for gradient components by going to the Fourier
space, and using the understanding that simple algebra in Fourier space translates into
highly non-trivial statements in the language of differential calculus.

\section{Korn and Korn-type inequalities for cylindrical shells}
\setcounter{equation}{0}
\label{sec:KI}
Let $\GO\subset\bb{R}^{3}$ be an open set.
Let $V$ be a subspace of $W^{1,2}(\GO;\mathbb R^3)$ such that
$W_0^{1,2}(\GO;\mathbb R^3)\subset V$. We recall that the Korn's constant
$K=K(V)$ is defined by
\begin{equation}
\label{def:Korn.constant}
K(V)=\sup\{K\ge 0:\|e(\BGf)\|^2\ge K\|\nabla \BGf\|^2\text{ for all }\BGf\in V\},
\end{equation}
where
\[
e(\BGf)=\hf\left(\Grad\BGf+(\Grad\BGf)^{T}\right)
\]
and $\|\cdot\|$ always denotes the $L^{2}$ norm on the domain of definition
of the function within the norm symbol. Equivalently,
\[
K(V)=\inf_{\BGf\in V}\frac{\|e(\BGf)\|^2}{\|\nabla \BGf\|^2}.
\]
In this paper we consider a family of circular cylindrical shells given in
cylindrical coordinates $(r,\Gth,z)$ as
\[
\CC_{h}=\{(r,\Gth,z):r\in I_{h},\ \Gth\in\bb{T},\ z\in [0,L]\},\qquad I_{h}=
\left[1-\frac{h}{2},1+\frac{h}{2}\right],
\]
where $\bb{T}$ is a 1-dimensional torus (circle) describing $2\pi$-periodicity in
$\Gth$.

Our goal is to examine the asymptotics of the Korn constant
$K(V_{h})$, as $h\to 0$, where $V_{h}$ is one of the subspaces
\begin{equation}
\label{BCzero}
V_{h}^1=\{\BGf\in W^{1,2}(\CC_{h};\mathbb R^3):\BGf(r,\Gth,0)=\Bzr,\
\phi_{r}(r,\Gth,L)=\phi_{\Gth}(r,\Gth,L)=0\}
\end{equation}
or
\begin{equation}
\label{Breather}
V_{h}^2=\{\BGf\in W^{1,2}(\CC_{h};\mathbb R^3):\phi_{\Gth}(r,\Gth,0)=\phi_{z}(r,\Gth,0)=
\phi_{\Gth}(r,\Gth,L)=\phi_{z}(r,\Gth,L)=0\}.
\end{equation}
This problem arises in the theory of buckling of slender bodies \cite{grtr07},
applied to circular cylindrical shells in our companion paper \cite{grha}.  In
the first case the bottom of the shell is kept fixed, while the top is allowed
only vertical displacements under the applied loads, in the second case the
loaded shell can ``breathe'', since the radial displacements are not
prescribed at either end.  In our notation the dependence on $L$ is
suppressed, while the essential dependence on $h$ is emphasized.

In cylindrical coordinates the gradient of $\BGf=\phi_r\Be_r+\phi_\theta
\Be_\theta+ \phi_z\Be_z, $has the form
\begin{equation}
\label{gradient}
\nabla \BGf=
\begin{bmatrix}
\phi_{r,r} & \dfrac{\phi_{r,\theta}-\phi_\theta}{r} & \phi_{r,z}\\
\phi_{\theta,r} & \dfrac{\phi_{\theta,\theta}+\phi_r}{r} & \phi_{\theta,z}\\
\phi_{z,r} & \dfrac{\phi_{z,\theta}}{r} & \phi_{z,z}\\
\end{bmatrix}.
\end{equation}

\subsection{Ansatz}
The famous Kirchhoff ansatz \cite{kirch1850} for columns and plates can be derived by
substituting the quadratic Taylor expansion of
a test function $\BGf(x,y,z,h)$ defined on $\Go\times[0,h]$ around
$(z,h)=(0,0)$ into $e(\BGf)$ and postulating cancellation of zeroth order
terms \cite{grtr07}. This simple and natural method for obtaining the ansatz for the
Korn inequality does not work in the case of a cylindrical shell, implying
that the dependence on $(r,h)$ is not smooth. This may manifest itself in the
formation of small scale \mc\ as $h\to 0$. We postulate that the dependence on
$r$ is, in fact, regular:
\begin{equation}
  \label{regr}
  \BGf^{h}(r,\Gth,z)=\Bu^{h}(\Gth,z)+(r-1)\Bv^{h}(\Gth,z).
\end{equation}
We substitute this ansatz into the the formula for $e(\BGf^{h})$ in cylindrical
coordinates and attempt to eliminate all terms of order zero in $r-1$. This is
possible, except for $\phi^{h}_{z,z}$:
\[
v_{r}^{h}=0,\quad v_{\Gth}^{h}=-u^{h}_{r,\Gth}+u^{h}_{\Gth},\quad
v_{z}^{h}=-u_{r,z}^{h},\quad
u_{r}^{h}=-u^{h}_{\Gth,\Gth},\quad u^{h}_{z,\Gth}=-u^{h}_{\Gth,z}.
\]
The last equation can be replaced with
\[
u^{h}_{\Gth}=w^{h}_{,\Gth},\qquad u_{z}^{h}=-w^{h}_{,z}.
\]
Hence, we obtain the ansatz that depends on a singe function $w^{h}(\Gth,z)$:
\[
\phi^{h}_{r}=-w^{h}_{,\Gth\Gth},\quad
\phi^{h}_{\Gth}=rw^{h}_{,\Gth}+(r-1)w^{h}_{,\Gth\Gth\Gth},\quad
\phi^{h}_{z}=-w^{h}_{,z}+(r-1)w^{h}_{,\Gth\Gth z}.
\]
In this case
\[
\Grad\BGf^{h}=
\begin{bmatrix}
  0 & -w^{h}_{,\Gth}-w^{h}_{,\Gth\Gth\Gth} &-w^{h}_{,\Gth\Gth z}\\[2ex]
w^{h}_{,\Gth}+w^{h}_{,\Gth\Gth\Gth} &
\dfrac{(r-1)(w^{h}_{,\Gth\Gth}+w^{h}_{,\Gth\Gth\Gth\Gth})}{r} &
rw^{h}_{,\Gth z}+(r-1)w^{h}_{,\Gth\Gth\Gth z}\\[2ex]
w^{h}_{,\Gth\Gth z} & \dfrac{-w^{h}_{,\Gth z}+(r-1)w^{h}_{,\Gth\Gth\Gth z}}{r} &
-w^{h}_{,zz}+(r-1)w^{h}_{,\Gth\Gth zz}
\end{bmatrix}
\]
\[
e(\BGf^{h})=
\begin{bmatrix}
  0 & 0 & 0\\[2ex]
0 & \dfrac{(r-1)(w^{h}_{,\Gth\Gth}+w^{h}_{,\Gth\Gth\Gth\Gth})}{r} &
\dfrac{(r-1)(w^{h}_{,\Gth z}+w^{h}_{,\Gth\Gth\Gth z})}{2r}\\[2ex]
0 & \dfrac{(r-1)(w^{h}_{,\Gth z}+w^{h}_{,\Gth\Gth\Gth z})}{2r} &
-w^{h}_{,zz}+(r-1)w^{h}_{,\Gth\Gth zz}
\end{bmatrix}
\]
We now assume that the functions $w^{h}(\Gth,z)$ exhibit
a small scale \mc:
\[
w^{h}(\Gth,z)=W\left(\frac{\Gth}{a_{h}},\frac{z-L/2}{b_{h}}\right),\quad
\Gth\in[-\pi,\pi],\ z\in[0,L],
\]
where
\[
\sqrt{h}<a_{h}\le 1,\quad h<b_{h}\le 1,\quad\lim_{h\to 0}a_{h}b_{h}=0.
\]
Here the function
$W(\eta,\Gz)$ can be any smooth compactly supported function on
 $(-1,1)^{2}$, while the function $w^{h}(\Gth,z)$ is extended as a
 $2\pi$-periodic function in $\Gth\in\bb{R}$.
 Thus, we compute
\[
|\Grad\BGf^{h}|^{2}=O\left(\max\left\{
\nth{a_{h}^{6}},\nth{a_{h}^{4}b_{h}^{2}},\frac{1}{b_{h}^{4}}\right\}\right).
\]
\[
|e(\BGf^{h})|^{2}=O\left(\max\left\{\frac{h^{2}}{a_{h}^{6}b_{h}^{2}},\frac{h^{2}}{a_{h}^{8}},
\frac{1}{b_{h}^{4}}\right\}\right).
\]
Thus,
\[
K(V_{h})\le C\min_{(a,b)\in[h,1]^{2}}\frac{\max\left\{\dfrac{h^{2}}{a^{6}b^{2}},
\dfrac{h^{2}}{a^{8}},\dfrac{1}{b^{4}}\right\}}
{\max\left\{\nth{a^{6}},\nth{a^{4}b^{2}},\dfrac{1}{b^{4}}\right\}}.
\]
It is a matter of simple analysis to show that the minimum is achieved at
$a=\sqrt[4]{h}$, $b=1$, giving $K(V_{h})\le Ch\sqrt{h}$.
Thus, we have proved the following theorem.
\begin{theorem}[Ansatz]
\label{th:KI:ansatz}
Let
\[
V_{h}^0=V_{h}^{1}\cap V_{h}^{2}=
\{\BGf\in W^{1,2}(\CC_{h};\mathbb R^3):\BGf(r,\Gth,0)=\BGf(r,\Gth,L)=\Bzr\}.
\]
Then there exist an absolute constant $C_{0}$ such that
\begin{equation}
  \label{KI:upper}
K(V_{h}^0)\leq C_{0}h\sqrt{h}.
\end{equation}
This is established via the ansatz
\begin{equation}
\begin{cases}
\label{ansatz0}
\phi^{h}_{r}(r,\Gth,z)=&-W_{,\eta\eta}\left(\frac{\Gth}{\sqrt[4]{h}},z\right)\\[2ex]
\phi^{h}_{\Gth}(r,\Gth,z)=&r\sqrt[4]{h}W_{,\eta}\left(\frac{\Gth}{\sqrt[4]{h}},z\right)+
\frac{r-1}{\sqrt[4]{h}}W_{,\eta\eta\eta}\left(\frac{\Gth}{\sqrt[4]{h}},z\right),\\[2ex]
\phi^{h}_{z}(r,\Gth,z)=&(r-1)W_{,\eta\eta z}\left(\frac{\Gth}{\sqrt[4]{h}},z\right)
-\sqrt{h}W_{,z}\left(\frac{\Gth}{\sqrt[4]{h}},z\right),
\end{cases}
\end{equation}
where the function $W(\eta,z)$ is a smooth compactly supported function on
 $(-1,1)\times(0,L)$, while the function $\BGf^{h}(\Gth,z)$ is extended as a
 $2\pi$-periodic function in $\Gth\in\bb{R}$.
  \end{theorem}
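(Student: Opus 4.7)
The plan is to observe that the theorem is essentially a compilation of the computation just carried out above: it asserts admissibility of the explicit ansatz \eqref{ansatz0} for $V_{h}^{0}$ and records the resulting Korn ratio for the specialization $a_{h}=\sqrt[4]{h}$, $b_{h}=1$. So the argument reduces to three steps: (a) checking admissibility, (b) identifying the leading terms in $\Grad\BGf^{h}$ and $e(\BGf^{h})$, and (c) integrating.

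First I would verify that $\BGf^{h}\in V_{h}^{0}$. Since $W$ is smooth and compactly supported in $(-1,1)\times(0,L)$, every $\eta$- and $z$-derivative of $W$ vanishes at $z=0$ and $z=L$; hence each of $\phi^{h}_{r},\phi^{h}_{\Gth},\phi^{h}_{z}$ vanishes on both end faces. The $\Gth$-support of $\BGf^{h}$ lies in $|\Gth|\le h^{1/4}$, which is contained in $(-\pi,\pi)$ for $h$ small, so the $2\pi$-periodic extension in $\Gth$ is smooth and $\BGf^{h}\in W^{1,2}(\CC_{h};\bb{R}^{3})$.

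Next I would substitute $w^{h}(\Gth,z)=W(\Gth/\sqrt[4]{h},z)$ into the matrices for $\Grad\BGf^{h}$ and $e(\BGf^{h})$ displayed just above the theorem. The chain rule turns each $\Gth$-derivative into a factor $h^{-1/4}$ on $W$, and on $\CC_{h}$ one has $|r-1|\le h/2$. The feature engineered into the ansatz is that the numerically largest entries of $\Grad\BGf^{h}$, namely the off-diagonal pair $\mp(w^{h}_{,\Gth}+w^{h}_{,\Gth\Gth\Gth})$ of pointwise size $h^{-3/4}$, are exactly skew-symmetric, and hence drop out of $e(\BGf^{h})$. What remains in $e(\BGf^{h})$ is bounded pointwise by an absolute constant, the dominant piece being $-w^{h}_{,zz}$ in the $(3,3)$ slot, while the $(2,3)$ and $(2,2)$ entries pick up an extra factor of $(r-1)=O(h)$ and are of lower order.

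Finally I would integrate over $\CC_{h}$ using the change of variables $\eta=\Gth/\sqrt[4]{h}$. The intersection of the support of the integrand with $\CC_{h}$ has measure $O(h\cdot h^{1/4})=O(h^{5/4})$, which yields $\|\Grad\BGf^{h}\|^{2}=O(h^{5/4}\cdot h^{-3/2})=O(h^{-1/4})$ and $\|e(\BGf^{h})\|^{2}=O(h^{5/4}\cdot 1)=O(h^{5/4})$; the quotient is therefore $O(h^{3/2})$, establishing \eqref{KI:upper}. There is no real obstacle beyond bookkeeping: the exponent $a_{h}=h^{1/4}$ was selected precisely to equalize the two worst competitors in the minimization preceding the theorem, so the only genuine point requiring care is the skew-symmetric cancellation of the $h^{-3/4}$ terms in $\Grad\BGf^{h}$, which one reads off directly from the two displayed matrices.
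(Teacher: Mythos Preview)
Your proposal is correct and follows essentially the same route as the paper: plug the ansatz $w^{h}(\theta,z)=W(\theta/h^{1/4},z)$ into the displayed formulas for $\nabla\BGf^{h}$ and $e(\BGf^{h})$, track powers of $h$, and integrate. The paper frames this as the specialization $a_{h}=h^{1/4}$, $b_{h}=1$ of the two-parameter scaling optimization preceding the theorem, but the verification itself is the computation you outline. One small point worth making explicit: to bound the Korn \emph{ratio} from above you need a genuine \emph{lower} bound on $\|\nabla\BGf^{h}\|^{2}$, not just the $O(h^{-1/4})$ upper bound; this follows from your change of variables $\eta=\theta/h^{1/4}$ applied to the single term $\|w^{h}_{,\theta\theta\theta}\|^{2}=h^{-1/4}\|W_{,\eta\eta\eta}\|^{2}_{L^{2}}$ once $W$ is chosen with $W_{,\eta\eta\eta}\not\equiv 0$. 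The paper is equally informal on this point.
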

We remark that inequality (\ref{KI:upper}) holds for $V^{1}_{h}$ and
$V^{2}_{h}$, given by (\ref{BCzero}) and (\ref{Breather}), respectively, since
$V_{h}^{0}\subset V_{h}^{i}$, $i=1,2$.

\subsection{Ansatz-free lower bounds}
\begin{theorem}[Ansatz free lower bound]
\label{th:KI}
There exist a constant $C(L)$ depending only on $L$ such that
\begin{equation}
  \label{KI:lower}
 K(V_{h}^i)\ge C(L)h^{3/2},\quad i=1,2.
\end{equation}
\end{theorem}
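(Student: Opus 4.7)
The plan is to follow the two-stage strategy announced in the introduction: first reduce the 3D Korn inequality on the thin shell $\CC_h$ to a family of 2D Korn-type inequalities on meridional cross-sections, and then prove those 2D inequalities by the harmonic projection method of \cite{kool88}. The natural cross-section is the thin rectangle $S_\Gth = I_h \times [0,L]$ obtained by fixing the angular coordinate $\Gth$, because the boundary conditions defining both $V_h^1$ and $V_h^2$ live on $\{z=0, L\}$, which are the short edges of $S_\Gth$, and because the ansatz of Theorem~\ref{th:KI:ansatz} oscillates only in $\Gth$ on scale $h^{1/4}$, suggesting that the crucial estimates factor through the $(r,z)$ variables after an angular integration.

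For Stage~1, reading off the gradient formula~(\ref{gradient}), the strain entries $e_{rr}, e_{rz}, e_{zz}$ of the 3D symmetric gradient coincide with the 2D symmetric gradient of the in-plane field $(\phi_r,\phi_z)(\cdot,\Gth,\cdot)$ on $S_\Gth$. A sharp 2D Korn-type inequality on $S_\Gth$ with partial clamping on $\{z=0,L\}$, after integration in $\Gth$, will then bound the four gradient entries $\phi_{r,r},\phi_{r,z},\phi_{z,r},\phi_{z,z}$ in $L^2(\CC_h)$ by $C(L)\,h^{-3/2}\|e(\BGf)\|^2$ plus controllable lower-order terms. The remaining five entries of~(\ref{gradient}), those involving $\phi_\Gth$ or $\partial_\Gth$, are to be recovered by combining the strain components $e_{\Gth\Gth}, e_{r\Gth}, e_{\Gth z}$ with the 2D inequality applied to $\partial_\Gth\BGf$ on each slice, and by invoking Poincar\'e's inequality in $z$ for $\phi_\Gth$ using the Dirichlet data at $z=0,L$.

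For Stage~2, the 2D Korn-type inequality on $S_\Gth$ is the technical heart. Following \cite{kool88}, I would decompose the planar displacement into its harmonic extension from the boundary trace plus a correction vanishing on $\partial S_\Gth$, use a Fourier expansion in $z$ to diagonalize the harmonic piece, and exploit the exponential decay of Fourier modes of wavelength $\lesssim h$ away from the short edges. The main obstacle is calibrating this argument so that it delivers exactly the exponent $h^{-3/2}$, rather than the cruder $h^{-2}$ produced by a naive Korn-plus-Poincar\'e estimate on a thin domain; this requires a quantitatively sharp use of the partial Dirichlet conditions on the short edges. A secondary difficulty is that the calibration must yield the same exponent for both $V_h^1$ and $V_h^2$ despite the rather different vanishing components at $z=0,L$ (in particular $\phi_r$ is clamped at both ends for $V_h^1$ but unconstrained for $V_h^2$), and the lower-order terms generated by the 2D estimate must be absorbable by the thinness $h$ together with the $z$-direction Poincar\'e inequality supplied by the Dirichlet data.
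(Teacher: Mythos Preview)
Your plan has a genuine gap at its core. You expect Stage~2 to produce a 2D Korn-type inequality on the meridional rectangle $S_\Gth=I_h\times[0,L]$ that, by itself, already scales like $h^{-3/2}$. That is impossible: the Korn constant on a thin rectangle with Dirichlet data on the short edges is of order $h^{2}$, no matter how carefully one exploits the clamping (the Kirchhoff ansatz $u=w(z)$, $v=-(x-1)w'(z)$ with $w(0)=w(L)=0$ already saturates it). The paper says this explicitly around (\ref{KIrect}): the $(r,z)$-slice Korn inequality ``is incapable of delivering the correct scaling law $h^{3/2}$''. So the calibration you describe in Stage~2 cannot succeed, and the ``controllable lower-order terms'' you mention are where the entire difficulty hides.

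The mechanism that actually produces $h^{3/2}$ is three-dimensional and absent from your outline. The paper's $(r,z)$-slice inequality (Theorem~\ref{th:basicineq:improved}) only gives
\[
\|\nabla\BGF\|^{2}\le C(L)\,\|e(\BGF)\|\Bigl(\tfrac{\|\phi_r\|}{h}+\|e(\BGF)\|\Bigr),
\]
leaving the residual $\|\phi_r\|/h$. Control of $\|\phi_r\|$ comes from a \emph{second} 2D inequality, this time on the $(r,\Gth)$ cross-section (Theorem~\ref{th:hard}), which exploits the crucial fact that $\phi_r$ appears \emph{undifferentiated} in $e_{\Gth\Gth}=(\phi_{\Gth,\Gth}+\phi_r)/r$ --- a coupling invisible on the meridional slice. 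That inequality yields $\|\phi_r\|\lesssim \Ge^{-2}\|\BA_{\rm sym}\|+\Ge\|\BA\|$; substituting back into the combined estimate (\ref{KSIA}) and optimizing at $\Ge=h^{1/4}$ is what turns $h^{-1}\cdot\|\phi_r\|$ into $h^{-3/2}$. Neither the $(r,\Gth)$ cross-section, nor the $\|\phi_r\|$ feedback estimate, nor the $h^{1/4}$ optimization appears in your plan. Your suggestion to handle the $\Gth$-related entries by applying the $(r,z)$ inequality to $\partial_\Gth\BGf$ does not capture this structure (and $\partial_\Gth\BGf$ need not satisfy the boundary conditions you would need).
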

The proof is conducted in two steps. In Section~\ref{sec:red} we reduce
inequality (\ref{KI:lower}) to the Korn-type inequalities in 2D that can be
regarded as refined versions of the 2D Korn inequality. In
Section~\ref{sec:2d} these 2D Korn-type inequalities are proved.

The intended application of these inequalities to buckling of cylindrical
shells requires that we also estimate the $L^{2}$ norms of the individual
components of the gradient matrix (\ref{gradient}) in terms of $\|e(\BGf)\|^{2}$.
We first observe that the inequalities
\[
\|(\Grad\BGf)_{rr}\|^{2}\le\|e(\BGf)\|^{2},\qquad\|(\Grad\BGf)_{\Gth\Gth}\|^{2}\le\|e(\BGf)\|^{2},
\qquad\|(\Grad\BGf)_{zz}\|^{2}\le\|e(\BGf)\|^{2}
\]
are obvious, as are the inequalities
\[
\|(\Grad\BGf)_{\Gth r}\|=2\|e(\BGf)_{r\Gth}-\hf(\Grad\BGf)_{r\Gth}\|\le
2\|e(\BGf)_{r\Gth}\|+\|(\Grad\BGf)_{r\Gth}\|\le2\|e(\BGf)\|+\|(\Grad\BGf)_{r\Gth}\|
\]
\[
\|(\Grad\BGf)_{zr}\|=2\|e(\BGf)_{rz}-\hf(\Grad\BGf)_{rz}\|\le
2\|e(\BGf)_{rz}\|+\|(\Grad\BGf)_{rz}\|\le2\|e(\BGf)\|+\|\phi_{r,z}\|
\]
\[
\|(\Grad\BGf)_{z\Gth}\|=2\|e(\BGf)_{\Gth z}-\hf(\Grad\BGf)_{\Gth z}\|\le
2\|e(\BGf)_{\Gth z}\|+\|(\Grad\BGf)_{\Gth z}\|\le2\|e(\BGf)\|+\|\phi_{\Gth,z}\|.
\]
The task is, therefore, to estimate the ratios
$\|(\Grad\BGf)_{r\Gth}\|/\|e(\BGf)\|$, $\|\phi_{r,z}\|/\|e(\BGf)\|$, and
$\|\phi_{\Gth,z}\|/\|e(\BGf)\|$.
\begin{theorem}
  \label{th:KLI}
There exists a constant
$C(L)$ depending only on $L$ such that for any $\BGf\in V_h^1\cup V_h^2$ we have
\begin{equation}
\label{KI:grad.compts}
\frac{\|(\Grad\BGf)_{r\Gth}\|^{2}}{\|e(\BGf)\|^{2}}\le
\frac{C(L)}{h\sqrt{h}},
\end{equation}
\begin{equation}
  \label{thetaz}
  \frac{\|\phi_{\Gth,z}\|^{2}}{\|e(\BGf)\|^{2}}\le\frac{C(L)}{\sqrt{h}},
\end{equation}
\begin{equation}
  \label{rz}
   \frac{\|\phi_{r,z}\|^{2}}{\|e(\BGf)\|^{2}}\le\frac{C(L)}{h}.
\end{equation}
\end{theorem}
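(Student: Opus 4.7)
My plan is to reduce Theorem~\ref{th:KLI} to mode-by-mode estimates via Fourier analysis, as foreshadowed in Section~\ref{sec:intro}. First I would extend $\BGf$ in $z$: the boundary conditions defining $V_h^1$ and $V_h^2$ specify, for each component of $\BGf$, whether it vanishes at $z=0$ and $z=L$. Each component can therefore be extended to $z\in[-L,L]$ by even or odd reflection with the correct parity, and then to all of $\bb{R}$ as a $2L$-periodic function. Together with the native $2\pi$-periodicity in $\theta$, this gives a Fourier expansion $\BGf(r,\theta,z)=\sum_{n,k}\hat\BGf(r;n,k)\,e^{in\theta}e^{ik\pi z/L}$, so that by Parseval both sides of each inequality in the theorem decompose into sums over modes $(n,k)\in\bb{Z}^2$ of one-dimensional $L^2$-norms on the thin radial interval $I_h$.

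For each mode $(n,k)$, writing $\gamma_k=k\pi/L$, the cylindrical strain formulas become the algebraic identities
\[
\widehat{e_{r\theta}}=\hf\left(\partial_r\hat\phi_\theta+\frac{in\hat\phi_r-\hat\phi_\theta}{r}\right),\ \ \widehat{e_{rz}}=\hf\bigl(i\gamma_k\hat\phi_r+\partial_r\hat\phi_z\bigr),\ \ \widehat{e_{\theta z}}=\hf\left(\frac{in\hat\phi_z}{r}+i\gamma_k\hat\phi_\theta\right),
\]
together with $\widehat{e_{rr}}=\partial_r\hat\phi_r$, $\widehat{e_{\theta\theta}}=(in\hat\phi_\theta+\hat\phi_r)/r$, $\widehat{e_{zz}}=i\gamma_k\hat\phi_z$. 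The three target gradient components become, mode by mode, a strain plus a ``cost'': $(\Grad\BGf)_{r\theta}$ costs $\partial_r\hat\phi_\theta$, $\phi_{r,z}$ costs $\partial_r\hat\phi_z$, and $\phi_{\theta,z}$ costs $(in/r)\hat\phi_z$. The last cost is purely algebraic in the Fourier variables, while the first two involve a radial derivative that must be absorbed by the thinness of $I_h$.

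The technical heart of the proof is to translate these costs back into strain-controlled quantities at the price of $h$-dependent factors. The underlying tool is the Poincaré--trace inequality on $I_h$,
\[
\|\hat g\|_{L^2(I_h)}^2\le Ch\,|\hat g(r_0)|^2+Ch^2\|\partial_r\hat g\|_{L^2(I_h)}^2,\qquad r_0\in I_h,
\]
which lets each radial derivative trade against a factor of $h$. Iterating this on the algebraic identities above eliminates $\partial_r$ in favor of $in$, $i\gamma_k$, and strain terms, producing a mode-wise ratio whose supremum over $(n,k)$ gives the global constant. Because the worst-case mode is located at different frequency scales for the three components (roughly $|n|\sim h^{-1/4}$ for $(\Grad\BGf)_{r\theta}$, matching the ansatz oscillation, and different balances for the other two), the same mechanism yields the three distinct $h$-exponents $h^{-3/2}$, $h^{-1/2}$, and $h^{-1}$.

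The main obstacle, I expect, is the handling of low-frequency modes, in particular those with $n\in\{0,\pm 1\}$ or $|k|$ small, where the Fourier symbol of the symmetrized gradient loses ellipticity and the algebraic identities above degenerate. Here the boundary conditions built into $V_h^1$ and $V_h^2$ become essential: they kill the $k=0$ Fourier component of the components that vanish at the ends, and a Poincaré inequality in $z$ with constant depending on $L$ closes the estimate on the remaining low modes. This is precisely where the dependence of $C(L)$ on $L$ enters, and organizing the low-mode argument so that it yields the same $h$-scaling as the generic-mode estimate is the most delicate bookkeeping step.
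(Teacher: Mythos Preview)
Your Fourier decomposition framework matches the paper's approach for inequality~(\ref{rz}), and the periodic extension in $z$ via the boundary conditions is exactly right. However, the mechanism you propose---a Poincar\'e--trace inequality on $I_h$ to ``trade each radial derivative against a factor of $h$''---is not what closes the argument, and as written it does not close: the inequality you quote controls $\|\hat g\|$ in terms of $\|\partial_r\hat g\|$ and a pointwise trace, not the other way around, and that trace has no a priori bound. Iterating the algebraic strain identities to eliminate $\partial_r$ leads in circles (one ends up needing control of $\|\hat\phi_r\|$, then $\|\hat\phi_\theta\|$, then $\|\hat\phi_r\|$ again) unless one already has a Korn-type bound on $\|\phi_r\|$ in hand.

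The paper's route leans instead on the results already established in Sections~\ref{sec:red}--\ref{sec:2d}. Inequality~(\ref{KI:grad.compts}) is immediate from the Korn inequality~(\ref{KI:lower}), and~(\ref{thetaz}) requires no Fourier analysis at all: it follows from~(\ref{G23}) together with the bound $\|\phi_r\|\le C(L)h^{-1/2}\|\BA_{\rm sym}\|$, obtained by setting $\Ge=h^{1/4}$ in~(\ref{bound.on.u_r}) and invoking~(\ref{KornA}). Only~(\ref{rz}) uses the mode-by-mode argument, and there the key step is to apply the Korn-type inequality~(\ref{KSI.gradient}) to each Fourier mode $\BGf^{(m,n)}$ separately. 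In the nontrivial regime $\|\phi_r^{(m,n)}\|>3\|e(\BGf^{(m,n)})\|$ this yields $n^2\|\phi_r^{(m,n)}\|\le C(L)h^{-1}\|e(\BGf^{(m,n)})\|$; an independent algebraic estimate via $e_{\theta\theta}$ and $e_{\theta z}$ gives $\|\phi_r^{(m,n)}\|\le C(L)(n^2/m^2)\|e(\BGf^{(m,n)})\|$; \emph{multiplying} these two inequalities produces $m^2\|\phi_r^{(m,n)}\|^2\le C(L)h^{-1}\|e(\BGf^{(m,n)})\|^2$, which is~(\ref{rz}) for that mode. The case split is not low versus high frequency but $\|\phi_r^{(m,n)}\|\lessgtr 3\|e(\BGf^{(m,n)})\|$. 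The idea you are missing, then, is to feed the already-proved Korn and Korn-type inequalities back into the individual Fourier modes, rather than to rebuild the estimates from scratch by one-dimensional analysis on $I_h$.
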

We observe that inequality (\ref{KI:grad.compts}) is an immediate
consequences of the Korn inequality (\ref{KI:lower}). The other two
inequalities are proved in Section~\ref{sec:gradcomp}.  The remarkable feature
of inequalities (\ref{KI:grad.compts})--(\ref{rz}) is the presence of 3
distinct scaling laws for different components of the gradient.  This is a
consequence of the high degree of symmetry possessed by the circular
cylindrical shell. We conjecture that deviations from the perfect symmetry
will ``mix'' the three cylindrical components producing a single scaling
exponent determined by the Korn constant. Another important observation is
that ansatz (\ref{ansatz0}) exhibits the scaling laws given by the upper
bounds for all 3 ratios in Theorem~\ref{th:KLI}.

\section{Reduction to 2D Korn-type inequalities}
\setcounter{equation}{0}
\label{sec:red}
In this section we give the proof of Theorem~\ref{th:KI} modulo 2D Korn-type
inequalities, which constitute the technical core of our method. The argument
in this section splits naturally into a sequence of successive steps.

\noindent\textbf{Step 1.} In this step we prove that one can replace $\Grad\BGf$ and $e(\BGf)$
in Theorem~\ref{th:KI} by
\[
\BA=\left[
\begin{array}{ccc}
  \phi_{r,r} & \phi_{r,\Gth}-\phi_{\Gth} & \phi_{r,z}\\
  \phi_{\Gth,r} & \phi_{\Gth,\Gth}+\phi_{r} & \phi_{\Gth,z}\\
  \phi_{z,r} & \phi_{z,\Gth} & \phi_{z,z}
\end{array}
\right],\quad\text{and}\quad \BA_{\rm sym}=\hf(\BA+\BA^{T}),
\]
respectively. The justification is based on a simple observation that
\begin{equation}
\label{nabUandA}
\|e(\BGf)-\BA_{\rm sym}\|^2\leq\|\nabla\BGf-\BA\|^2\leq h^2\|\BA\|^2.
\end{equation}
Indeed, if we can prove that $\|\BA\|^{2}\le Ch^{-3/2}\|\BA_{\rm sym}\|^{2}$, then
\[
\|\BA\|^{2}\le Ch^{-3/2}\|\BA_{\rm sym}\|^{2}\le
Ch^{-3/2}(\|e(\BGf)\|^{2}+h^{2}\|\BA\|^2),
\]
and therefore $(1-C\sqrt{h})\|\BA\|^{2}\le Ch^{-3/2}\|e(\BGf)\|^{2}$. Thus, for
sufficiently small $h$ we also have
\[
\|\BA\|^{2}\le Ch^{-3/2}\|e(\BGf)\|^{2},
\]
concluding that
\[
\|\Grad\BGf\|^{2}\le 2\|\nabla\BGf-\BA\|^2+2\|\BA\|^{2}\le2(h^{2}+1)\|\BA\|^{2}\le
Ch^{-3/2}\|e(\BGf)\|^{2}.
\]
\noindent\textbf{Step 2.} In order to prove the Korn inequality for $\BA$
we need to estimate the quantities
\[
G_{12}^{2}=\|\phi_{\Gth,r}\|^{2}+\|\phi_{r,\Gth}-\phi_{\Gth}\|^{2},\qquad
G_{13}^{2}=\|\phi_{r,z}\|^{2}+\|\phi_{z,r}\|^{2},\qquad
G_{23}^{2}=\|\phi_{z,\Gth}\|^{2}+\|\phi_{\Gth,z}\|^{2}
\]
in terms of
\[
E_{12}^{2}=\|\phi_{\Gth,r}+\phi_{r,\Gth}-\phi_{\Gth}\|^{2},\qquad
E_{13}^{2}=\|\phi_{r,z}+\phi_{z,r}\|^{2},\qquad
E_{23}^{2}=\|\phi_{z,\Gth}+\phi_{\Gth,z}\|^{2},
\]
\[
E_{11}^{2}=G_{11}^{2}=\|\phi_{r,r}\|^{2},\qquad
E_{22}^{2}=G_{22}^{2}=\|\phi_{\Gth,\Gth}+\phi_{r}\|^{2},\qquad
E_{33}^{2}=G_{33}^{2}=\|\phi_{z,z}\|^{2}.
\]
\noindent\textbf{Estimate for $G_{23}$.} This estimate is the simplest to make. Integration by parts, using the \bc s $\phi_{\Gth}=0$ at $z=0$ and
$z=L$, common to the spaces $V_{h}^{1}$ and $V_{h}^{2}$, and the
periodicity in $\Gth$, gives
\[
|(\phi_{z,\Gth},\phi_{\Gth,z})|=|(\phi_{z,z},\phi_{\Gth,\Gth})|\le\|\phi_{z,z}\|\|\phi_{\Gth,\Gth}\|\le
E_{33}(E_{22}+\|\phi_{r}\|),
\]
where $(f,g)$ denotes the inner product of $f$ and $g$ in $L^{2}(\CC_{h})$.
It follows that
\begin{equation}
  \label{G23}
G_{23}^{2}=E_{23}^{2}-2(\phi_{z,\Gth},\phi_{\Gth,z})\le
E_{23}^{2}+E_{22}^{2}+E_{33}^{2}+2E_{33}\|\phi_{r}\|\le2\|\BA_{\rm sym}\|(\|\BA_{\rm sym}\|+\|\phi_{r}\|).
\end{equation}
\noindent\textbf{Estimate for $G_{13}$.} It would seem that the most natural way to estimate
$G_{13}$ is by the Korn inequality on the rectangle $I_{h}\times[0,L]$ \cite{Kohn:1985:NMT}:
\begin{equation}
  \label{KIrect}
\|e(\BGF)\|^{2}\ge Ch^{2}\|\Grad\BGF\|^{2},
\end{equation}
where
$\BGF(r,z)=(\phi_{r}(r,\Gth_{0},z),\phi_{z}(r,\Gth_{0},z))$ for each fixed
$\Gth_{0}$, since $\phi_{z}(r,\Gth_{0},0)=0$.
However, inequality (\ref{KIrect}) is incapable
of delivering the correct scaling law $h^{3/2}$ of the 3D Korn constant, and, hence, a
more delicate estimate is required.
\begin{theorem}[``First-and-a-half Korn inequality'']
  \label{th:basicineq:improved}
There exists a constant $C_0(L)>0$ depending only on $L$, such that, if the vector field $\BGf=(u,v)\in H^{1}(I_{h}\times[0,L];\bb{R}^{2})$ satisfies
$v(x,0)=0$, $x\in I_{h}$ in the sense of traces, then for
 any $h\in(0,1)$ and any $L>0$
 \begin{equation}
   \label{poltora}
   \|\nabla \BGf\|^{2}\le C_0(L)\|e(\BGf)\|\left(\frac{\|u\|}{h}+\|e(\BGf)|\right).
 \end{equation}
\end{theorem}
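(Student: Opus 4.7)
The plan is to reduce the full gradient bound to a bound on the single off-diagonal entry $v_{,x}$. The other three entries of $\nabla\BGf$ are controlled for free: $\|u_{,x}\|=\|e_{11}\|\le\|e(\BGf)\|$, $\|v_{,z}\|=\|e_{22}\|\le\|e(\BGf)\|$, and the identity $u_{,z}+v_{,x}=2e_{12}$ yields $\|u_{,z}\|\le 2\|e(\BGf)\|+\|v_{,x}\|$. Hence (\ref{poltora}) will follow once I establish the quadratic inequality
\[
\|v_{,x}\|^{2}\le C\|e(\BGf)\|\|v_{,x}\|+C\|e(\BGf)\|\bigl(\|u\|/h+\|e(\BGf)\|\bigr),
\]
after which absorbing the linear-in-$\|v_{,x}\|$ term by AM--GM gives the claim.

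I would derive this by starting from $\|v_{,x}\|^{2}=2(v_{,x},e_{12})-(v_{,x},u_{,z})$ and using Green's theorem for the Jacobian,
\[
(e_{11},e_{22})-(u_{,z},v_{,x})=\int_{\Omega}\det(\nabla\BGf)\,dx\,dz=\oint_{\partial\Omega}u\,dv.
\]
The bottom edge of the rectangle contributes nothing because $v(x,0)=0$ forces $v_{,x}(x,0)=0$, leaving a top-edge piece $-\int_{I_{h}}u(x,L)v_{,x}(x,L)\,dx$ and a lateral piece $\int_{0}^{L}[u\,e_{22}]_{1-h/2}^{1+h/2}\,dz$. Rewriting $e_{22}=v_{,z}$ on the lateral sides and integrating by parts in $z$ (using $v(x_{b},0)=0$ at the lateral corners), together with a matching $x$-integration by parts on the top-edge piece, causes the corner contributions to telescope out and leaves
\[
\|v_{,x}\|^{2}=2(v_{,x},e_{12})-(e_{11},e_{22})+\int_{I_{h}}e_{11}(x,L)v(x,L)\,dx-\int_{0}^{L}[u_{,z}v]_{1-h/2}^{1+h/2}\,dz.
\]
The first two summands are bounded by $\|v_{,x}\|\|e(\BGf)\|$ and $\|e(\BGf)\|^{2}$, respectively; the $v$-trace factors are absorbed into $\|e(\BGf)\|$ via the Poincar\'e bound $\|v\|\le L\|e(\BGf)\|$, itself a consequence of $v(x,0)=0$; and the scaled trace inequality on the thin interval,
\[
\|f(1\pm h/2,\cdot)\|_{L^{2}([0,L])}^{2}\le C\bigl(h^{-1}\|f\|^{2}+h\|f_{,x}\|^{2}\bigr),
\]
applied to $f=u$, produces exactly the factor $\|u\|/h$ that appears in the theorem.

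The main technical obstacle is that the remaining boundary quantities $u_{,z}(1\pm h/2,\cdot)$ and $e_{11}(\cdot,L)$ do not a priori exist in $L^{2}$ for general $\BGf\in H^{1}(\Omega)$, since this would require an extra derivative of $\BGf$ ($u_{,zx}=e_{11,z}$). The resolution, in the spirit of the harmonic projection method of Kondratiev--Oleinik cited by the authors, is to decompose $\BGf=\BGf_{H}+\BGf_{0}$ into a componentwise harmonic part $\BGf_{H}$ (for which boundary traces of $\nabla\BGf_{H}$ are governed by explicit Fourier representations on the thin rectangle, so that the scaled trace inequality can be legitimately applied) and a part $\BGf_{0}\in H^{1}_{0}(\Omega)$ to which classical Korn applies with an $h$-independent constant. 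Carrying out the bookkeeping on each piece separately and recombining gives the asymmetric bound of the theorem.
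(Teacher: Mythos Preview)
Your Jacobian--determinant route is genuinely different from the paper's argument, but the proposal has a real gap at the boundary-term step that the proposed harmonic decomposition does not close.

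After your integrations by parts the two surviving boundary integrals,
\[
\int_{I_{h}}e_{11}(x,L)\,v(x,L)\,dx\qquad\text{and}\qquad\int_{0}^{L}[u_{,z}v]_{1-h/2}^{1+h/2}\,dz,
\]
involve \emph{tangential} derivatives of the traces ($u_{,x}$ along $\{z=L\}$, $u_{,z}$ along the lateral sides). Because these quantities are determined entirely by $\BGf|_{\partial\Omega}$, they are identical for $\BGf$ and for the componentwise harmonic part $\BGf_{H}$; the splitting $\BGf=\BGf_{H}+\BGf_{0}$ therefore gains nothing for them. The scaled trace inequality you quote bounds the lateral $L^{2}$ trace of $f$, not of $f_{,z}$; applying it to $f=u$ does not touch the factor $u_{,z}$ that actually appears. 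If instead one treats the lateral integral as the legitimate $H^{1/2}$--$H^{-1/2}$ pairing, the scaled trace estimate on the thin rectangle reads
\[
\|u\|_{H^{1/2}(\mathrm{lat})}^{2}\le C\bigl(h^{-1}\|u\|^{2}+h\|u_{,x}\|^{2}+h^{-1}\|u_{,z}\|^{2}\bigr),
\]
and the term $h^{-1}\|u_{,z}\|^{2}\sim h^{-1}\|v_{,x}\|^{2}$ feeds back into the very quantity you are bounding, with a prefactor $h^{-1}$ that prevents the quadratic inequality from closing to (\ref{poltora}). No ``explicit Fourier representation on the thin rectangle'' is available for $\BGf_{H}$ either, since neither component satisfies a periodic-type condition.

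The paper avoids boundary terms altogether. It first performs the even--odd extension ($u$ even, $v$ odd in $z$), which turns the hypothesis $v(x,0)=0$ into the condition $\tilde u(x,-L)=\tilde u(x,L)$. This single matching condition is exactly what makes the $z$-Fourier expansion of the harmonic projection $w$ of $\tilde u$ converge in $H^{1}$, and the key inequality
\[
\|w_{,z}\|^{2}\le \frac{2\sqrt{3}}{h}\,\|w\|\,\|w_{,x}\|+\|w_{,x}\|^{2}
\]
is then read off from the Fourier coefficients. Combining it with the Kondratiev--Oleinik bound $\|\nabla(u-w)\|\le C\|e(\BGf)\|$ (from the identity $\Delta u=(e_{11}-e_{22})_{,x}+2(e_{12})_{,z}$, tested against $u-w\in H^{1}_{0}$) yields the theorem. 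Note that only $u$ is projected to a harmonic function; $v$ is never touched. Your outline lacks the even--odd extension, and without it there is no basis in which the boundary traces of $\nabla\BGf_{H}$ can be controlled with the required $h$-scaling.
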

The theorem is proved in Section~\ref{sec:2d}.
We emphasize that there are no boundary conditions imposed on $u(x,y)$.
Applying Theorem~\ref{th:basicineq:improved} to the vector field $\BGF(r,z)$ for
every $\Gth_{0}$ and integrating the resulting
inequality in $\Gth_{0}$ over $[0,2\pi]$ we obtain, via the Cauchy-Schwartz
inequality for the product term
\begin{equation}
  \label{G13}
G_{13}^{2}\le C(L)\left(E_{11}^{2}+E_{13}^{2}+E_{33}^{2}+
\frac{\|\phi_{r}\|}{h}(E_{11}+E_{13}+E_{33})\right)\le
C(L)\|\BA_{\rm sym}\|\left(\|\BA_{\rm sym}\|+\frac{\|\phi_{r}\|}{h}\right).
\end{equation}
\noindent\textbf{Estimate for $G_{12}$.} The estimate for $G_{12}$ requires
an even more delicate Korn-type inequality for rectangles than the estimate for $G_{13}$.
\begin{theorem}
\label{th:hard}
Suppose that the vector field $\BGf=(u,v)\in
H^{1}(I_{h}\times[0,2\pi];\bb{R}^{2})$ satisfies $\BGf(x,0)=\BGf(x,2\pi)$ in
the sense of traces. Then
\begin{equation}
  \label{uest}
\|u\|^{2}\le\|\Be_{*}\|^{2}+2\|\BG_{*}\|\|v\|+2\|v\|^{2},
\end{equation}
where
\[
\BG_{*}=\mat{u_{x}}{u_{y}-v}{v_{x}}{v_{y}+u},\qquad
\Be_{*}=\hf(\BG_{*}+\BG_{*}^{T}).
\]
In addition, there exist absolute numerical constants $\Gs>0$ and
$C_{0}>0$, such that for any $h\in(0,\Gs)$
\begin{equation}
  \label{crazy}
\|\BG_{*}\|^{2}\le C_{0}\left(\|\Be_{*}\|^{2}+\|\Be_{*}\|\frac{\|u\|}{h}+\|v\|^{2}\right).
\end{equation}
\end{theorem}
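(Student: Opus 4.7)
\textbf{Part (\ref{uest})} is an integration-by-parts identity. Reading off the $(2,2)$ entry of $\Be_*$ gives the pointwise relation $u=(\Be_*)_{22}-v_y$. Testing this against $u$ on $I_h\times[0,2\pi]$ and integrating the $v_y$ term by parts in $y$---the boundary contributions cancel by periodicity---produces
\[
\|u\|^2=(u,(\Be_*)_{22})+(u_y,v)=(u,(\Be_*)_{22})+((\BG_*)_{12},v)+\|v\|^2,
\]
where we used $u_y=(\BG_*)_{12}+v$. Cauchy--Schwarz on the two inner products, followed by the absorption $\|u\|\,\|\Be_*\|\le\tfrac12\|u\|^2+\tfrac12\|\Be_*\|^2$, delivers (\ref{uest}).

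\textbf{Part (\ref{crazy})} is the technical heart, and the strategy is to reduce it to Theorem~\ref{th:basicineq:improved}. By Young's inequality $\|\Be_*\|\|u\|/h\le\eps\|u\|^2/h^2+\tfrac{1}{4\eps}\|\Be_*\|^2$, estimate (\ref{crazy}) is equivalent, up to absolute constants, to
\[
\|\BG_*\|^2\le C_0\bigl(\|\Be_*\|^2+\|u\|^2/h^2+\|v\|^2\bigr),
\]
which will be the working form. First, move from the cylindrical-like pair $(\BG_*,\Be_*)$ to the Cartesian pair $(\nabla\BGf,e(\BGf))$ via $\BG_*=\nabla\BGf+M$ and $\Be_*=e(\BGf)+\tfrac12(M+M^T)$, where $M=\bigl[\begin{smallmatrix}0&-v\\0&u\end{smallmatrix}\bigr]$. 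Since $\|M\|^2=\|u\|^2+\|v\|^2$ and $h\le 1$ gives $\|u\|^2\le\|u\|^2/h^2$, the error terms generated by this substitution already fit into the right-hand side of the working form, and we are reduced to proving the Cartesian estimate $\|\nabla\BGf\|^2\le C_0(\|e(\BGf)\|^2+\|u\|^2/h^2+\|v\|^2)$.

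To exploit Theorem~\ref{th:basicineq:improved}, whose hypothesis is $v(x,0)=0$, the periodicity of $\BGf$ must be neutralized. Pigeonhole on $\int_0^{2\pi}\|v(\cdot,y)\|_{L^2(I_h)}^2\,dy=\|v\|^2$ produces a slice $y_0\in[0,2\pi]$ with $\|v(\cdot,y_0)\|_{L^2(I_h)}^2\le\|v\|^2/(2\pi)$; relabeling so that $y_0=0$ and subtracting a lifting $\ell$ of the trace $v(\cdot,0)$ gives $\tilde v=v-\ell$ satisfying $\tilde v(x,0)=0$. A naive product lifting $v(\cdot,0)\psi(y)$ is unavailable (it would require a trace of $v_x$, which need not exist in $L^2$), so one chooses $\ell$ to be a harmonic/Poisson extension whose $H^1$-norm is controlled by $\|v(\cdot,0)\|_{H^{1/2}(I_h)}$ and ultimately by $\|v\|$-related quantities. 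Applying Theorem~\ref{th:basicineq:improved} to $\tilde\BGf=(u,\tilde v)$ and converting back to $\BGf=(u,v)$ introduces $\nabla\ell$-corrections that, by thin-strip trace estimates, are absorbed into the $\|v\|^2$ term; the smallness threshold $h<\sigma$ emerges when consolidating the Young constants used in these absorptions.

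\textbf{Main obstacle.} The subtle point is the construction and estimation of $\ell$: the trace $v(\cdot,0)$ sits only in $H^{1/2}(I_h)$, and the extension must be chosen so that its $H^1$-norm on the thin strip $I_h\times[0,2\pi]$ does not introduce any spurious factor of $h^{-1}$ beyond the one already supplied by Theorem~\ref{th:basicineq:improved}. The pigeonhole smallness $\|v(\cdot,0)\|_{L^2(I_h)}\lesssim\|v\|$ is precisely what makes the required bounds on $\|\nabla\ell\|$ attainable.
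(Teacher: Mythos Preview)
Your proof of (\ref{uest}) is correct and essentially identical to the paper's.

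For (\ref{crazy}), however, the reduction to the ``working form'' $\|\BG_*\|^2\le C_0(\|\Be_*\|^2+\|u\|^2/h^2+\|v\|^2)$ is a genuine gap. The Young inequality you quote shows only that (\ref{crazy}) \emph{implies} the working form, not the reverse---and it is the reverse you need. In the regime $\|u\|/h\gg\|\Be_*\|$, which is exactly where the estimate is delicate, the right-hand side of your working form is of order $\|u\|^2/h^2$, while that of (\ref{crazy}) is of order $\|\Be_*\|\|u\|/h$, strictly smaller. So even if your (admittedly incomplete) lifting argument succeeded, it would not establish (\ref{crazy}). Bootstrapping via (\ref{uest}) does not rescue this either: substituting (\ref{uest}) into $\|u\|^2/h^2$ produces a term $\|\BG_*\|\|v\|/h^2$ that cannot be absorbed.

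The paper avoids both this trap and your lifting construction. Rather than Theorem~\ref{th:basicineq:improved} (which needs $v(x,0)=0$), it invokes Lemma~\ref{lem:basicineq}, whose only hypothesis is periodicity of $u$---already present here---applied with $\Ga=1$ to the perturbed field $\tilde\BGf=(u,(1-x)v)$. The point of the multiplicative perturbation of $v$ is that $\|\tilde\Be-\Be_*\|=O(h)(\|v_x\|+\|v_y\|)$, so the \emph{product} $\|\tilde\Be\|\,\|u\|/h$ delivered by Lemma~\ref{lem:basicineq} transfers directly to $\|\Be_*\|\,\|u\|/h$ plus lower-order errors. After absorbing those one arrives at
\[
\|\BG_*\|^2\le C_0\Bigl(\|\Be_*\|^2+\|\Be_*\|\frac{\|u\|}{h}+\|u\|^2+\|v\|^2\Bigr),
\]
with $\|u\|^2$ (no factor $h^{-2}$), and now (\ref{uest}) disposes of that term. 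The essential idea you are missing is that the product structure $\|\Be_*\|\,\|u\|/h$ must be preserved throughout; splitting it by Young at the outset discards precisely the information that makes (\ref{crazy}) sharper than your working form.
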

The theorem is proved in Section~\ref{sec:2d}.

We apply inequality (\ref{crazy}) to the vector field
\begin{equation}
  \label{PHI}
  \BGF(r,\Gth)=(\phi_{r}(r,\Gth,z_{0}),\phi_{\Gth}(r,\Gth,z_{0}))
\end{equation}
for every $z_{0}\in[0,L]$. Integrating the resulting
inequality in $z_{0}$ and using the Cauchy-Schwarz inequality for the product
term, we obtain
\[
G_{12}^{2}\le C_{0}\left(\|\BA_{\rm sym}\|^{2}+\|\BA_{\rm sym}\|\frac{\|\phi_{r}\|}{h}+\|\phi_{\Gth}\|^{2}\right).
\]
We estimate via the 1D Poincar\'e inequality and (\ref{G23})
\begin{equation}
  \label{Poincare}
\|\phi_{\Gth}\|^{2}\le\frac{L^{2}}{\pi^{2}}\|\phi_{\Gth,z}\|^{2}\le\frac{L^{2}}{\pi^{2}}G_{23}^{2}
\le \frac{2L^{2}}{\pi^{2}}(\|\BA_{\rm sym}\|^{2}+\|\BA_{\rm sym}\|\|\phi_{r}\|).
\end{equation}
Thus, there exists a constant $C(L)\le C_{0}(L^{2}(\Gs+1)+1)$ such that
\begin{equation}
\label{G12}
G_{12}^{2}\le C(L)\|\BA_{\rm sym}\|\left(\|\BA_{\rm sym}\|+\frac{\|\phi_{r}\|}{h}\right).
\end{equation}
Combining inequalities (\ref{G23}), (\ref{G13}) and (\ref{G12}) we obtain the
3D Korn-type inequality
\begin{equation}
\label{KSIA}
\|\BA\|^2\leq C_1(L)\|\BA_{\rm sym}\|\left(\frac{\|\phi_r\|}{h}+\|\BA_{\rm sym}\|\right).
\end{equation}
It is now clear that in order to prove the Korn inequality (\ref{KI:lower}) we
need to estimate $\|\phi_r\|$.

\noindent\textbf{Estimate for $\|\phi_r\|$.} The estimate for
$\|\phi_r\|$ is based on inequality (\ref{uest}) in Theorem~\ref{th:hard}
applied to the vector field $\BGF$, given by (\ref{PHI}). Integrating the
resulting inequality in
$z_{0}$, and using the Cauchy-Schwarz
inequality for the product term we obtain
\[
\|\phi_{r}\|^{2}\le\|\BA_{\rm sym}\|^{2}+2\|\BA\|\|\phi_{\Gth}\|+2\|\phi_{\Gth}\|^{2}\le
\|\BA_{\rm sym}\|^{2}+\Ge^{2}\|\BA\|^{2}+\left(2+\nth{\Ge^{2}}\right)\|\phi_{\Gth}\|^{2}
\]
for any $\Ge>0$.
The small parameter $\Ge\in(0,1)$ will be chosen later to optimize the resulting
inequality.
By the ``Poincar\'e inequality'' (\ref{Poincare}) we obtain for
sufficiently small $\Ge$
\[
\|\phi_{r}\|^{2}\le\left(\frac{L^{2}}{\Ge^{2}}+1\right)\|\BA_{\rm sym}\|^{2}+\Ge^{2}\|\BA\|^{2}+
\frac{L^{2}}{\Ge^{2}}\|\BA_{\rm sym}\|\|\phi_{r}\|.
\]
Therefore,
\[
\|\phi_{r}\|^{2}\le2\left(\frac{L^{2}}{\Ge^{2}}+1\right)^{2}\|\BA_{\rm sym}\|^{2}+2\Ge^{2}\|\BA\|^{2}.
\]
Thus,
\begin{equation}
\label{bound.on.u_r}
\|\phi_{r}\|\le\sqrt{2}\left(\left(\frac{L^{2}}{\Ge^{2}}+1\right)\|\BA_{\rm sym}\|+\Ge\|\BA\|\right).
\end{equation}
Substituting this inequality into (\ref{KSIA}), we conclude that there is a
constant $C(L)$, depending only on $L$, such that
\[
\|\BA\|^{2}\le C(L)\left(\nth{h\Ge^{2}}+\frac{\Ge^{2}}{h^{2}}\right)\|\BA_{\rm sym}\|^{2}.
\]
We now choose $\Ge=h^{1/4}$ to minimize the upper bound:
\begin{equation}
\label{KornA}
\|\BA\|^{2}\le\frac{C(L)}{h\sqrt{h}}\|\BA_{\rm sym}\|^{2},
\end{equation}
which, due to Step 1 completes the proof of Theorem~\ref{th:KI}, modulo
Theorems~\ref{th:basicineq:improved} and \ref{th:hard}.
\begin{corollary}
\label{cor:KSI}
Inequality (\ref{KSIA}) remains valid if $A$ is replaced by $\nabla \BGf$, i.e.,
\begin{equation}
\label{KSI.gradient}
\|\nabla \BGf\|^2\leq C(L)\|e(\BGf)\|\left(\frac{\|\phi_r\|}{h}+\|e(\BGf)\|\right).
\end{equation}
\end{corollary}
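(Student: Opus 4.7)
The corollary is a perturbative upgrade of (\ref{KSIA}) from the ``approximate gradient'' $\BA$ and its symmetric part $\BA_{\rm sym}$ to the true cylindrical gradient $\Grad\BGf$ and strain $e(\BGf)$. The inputs are the $O(h)$ discrepancy bound (\ref{nabUandA}) already recorded in Step~1 of Section~\ref{sec:red}, together with the estimate (\ref{KornA}) that was just derived during the proof of Theorem~\ref{th:KI}. The triangle inequality applied to (\ref{nabUandA}) immediately yields, at essentially no cost,
\[
\|\Grad\BGf\|^2\le 2(1+h^2)\|\BA\|^2,\qquad \|\BA_{\rm sym}\|\le \|e(\BGf)\|+h\|\BA\|.
\]
So it suffices to show that $\|\BA\|$ admits a bound of the same shape as (\ref{KSIA}) but with $\|e(\BGf)\|$ in place of $\|\BA_{\rm sym}\|$.

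The crux of the argument is to show that $\|\BA_{\rm sym}\|$ and $\|e(\BGf)\|$ are in fact comparable uniformly in $h$. For this one couples the second triangle bound with (\ref{KornA}), which gives $h\|\BA\|\le C(L)^{1/2}h^{1/4}\|\BA_{\rm sym}\|$, producing the self-improving inequality
\[
\|\BA_{\rm sym}\|\le \|e(\BGf)\|+C(L)^{1/2}h^{1/4}\|\BA_{\rm sym}\|.
\]
For $h$ below an explicit $L$-dependent threshold the coefficient of the second summand is strictly less than $1/2$, and it can be absorbed on the left to give $\|\BA_{\rm sym}\|\le 2\|e(\BGf)\|$.

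Plugging this last bound into (\ref{KSIA}) yields directly $\|\BA\|^2\le 4C_1(L)\|e(\BGf)\|(\|\phi_r\|/h+\|e(\BGf)\|)$, and then the first triangle bound (for $h\le 1$) converts the left-hand side into $\|\Grad\BGf\|^2$ at the expense of a fixed numerical factor, delivering (\ref{KSI.gradient}) with a constant $C(L)$ that is a numerical multiple of $C_1(L)$. For the complementary range $h\in[h_0,1]$ with $h_0>0$ fixed, the desired inequality reduces to the classical 3D Korn inequality on the fixed domain $\CC_{h_0}$ under either of the boundary conditions (\ref{BCzero}) or (\ref{Breather}), with a constant depending only on $L$ and $h_0$; merging the two regimes yields a single $C(L)$.

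The only potential obstacle is the appearance of circularity: (\ref{KornA}) is the Korn inequality we are effectively upgrading, so the argument must quote it rather than try to re-prove it from (\ref{KSIA}) alone. Once that lemma is invoked, the proof is purely algebraic, with no new functional-analytic content and no new boundary condition manipulations beyond those already carried out in Step~1.
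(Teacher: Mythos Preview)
Your proposal is correct and follows essentially the same route as the paper: both arguments use the discrepancy bound (\ref{nabUandA}) together with the already-established Korn-type estimate ((\ref{KornA}) in your case, the equivalent (\ref{KI:lower}) in the paper's) to show that $\|\BA_{\rm sym}\|$ and $\|e(\BGf)\|$ are comparable up to an $O(h^{1/4})$ correction, and then feed this comparability back into (\ref{KSIA}). Your handling of the regime $h\ge h_0$ via the classical Korn inequality on the (uniformly Lipschitz) family $\CC_h$ is a harmless elaboration the paper leaves implicit in its small-$h$ setting.
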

\begin{proof}
Combining inequalities (\ref{KI:lower}) and (\ref{nabUandA}) we get

\begin{equation}
\label{e(phi)A_sym}
(1-C(L)h^{1/4})\|\BA_{\rm sym}\|\leq \|e(\BGf)\|\leq (1+C(L)h^{1/4})\|\BA_{\rm sym}\|,
\end{equation}

 which
 together with (\ref{KSIA}) implies (\ref{KSI.gradient}).
\end{proof}

\section{Korn and Korn-type inequalities in two dimensions}
\setcounter{equation}{0}
\label{sec:2d}
In this section our goal is to prove Theorems~\ref{th:basicineq:improved} and
\ref{th:hard}. We begin with an auxiliary lemma that will be essential in the
proof of both theorems.
\begin{lemma}
  \label{lem:basicineq}
Suppose that the vector field $\BGf(x,y)=(u(x,y),v(x,y))\in H^{1}(I_{h}\times[0,p];\bb{R}^{2})$ satisfies
$u(x,0)=u(x,p)$ in the sense of traces. Then there exists a constant $C_0(p)$ depending only on $p$ such that for
any $\Ga\in[-1,1]$, any $h\in(0,1)$ and any $p>0$
\begin{equation}
\label{basicineq}
\|\BG_{\Ga}\|^{2}\le C_0(L)\|\Be_{\Ga}\|\left(\frac{\|u\|}{h}+\|\Be_{\Ga}\|\right),
\end{equation}
where
\[
\BG_{\Ga}=\BG_{\Ga}(\BGf)=\mat{u_{x}}{u_{y}}{v_{x}}{v_{y}+\Ga u},\qquad
\Be_{\Ga}=\Be_{\Ga}(\BGf)=\hf(\BG_{\Ga}(\BGf)+(\BG_{\Ga}(\BGf))^{T}).
\]
\end{lemma}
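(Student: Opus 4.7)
The plan is to reduce the inequality to bounding the scalar skew part $s = \tfrac12(u_y - v_x)$ of $\BG_\alpha$, using the pointwise algebraic identity
\[
\|\BG_\alpha\|^2 = \|\Be_\alpha\|^2 + 2\|s\|^2,
\]
which reduces the lemma to proving $\|s\|^2 \le C_0(p)\|\Be_\alpha\|(\|u\|/h + \|\Be_\alpha\|)$. Writing $e_{ij}$ for the entries of $\Be_\alpha$, so that $s = u_y - e_{12}$, and averaging in $y$, the periodicity of $u$ yields $\bar s(x) = -\bar e_{12}(x)$, so the zero-mode of $s$ is controlled by $\|\Be_\alpha\|$ immediately. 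The genuine work therefore lies on the oscillatory part.

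For the oscillatory part I would expand $u$ as a Fourier series $u(x,y) = \sum_n a_n(x) e^{in\omega y}$ with $\omega = 2\pi/p$. Because $v$ is not assumed periodic, integration by parts in $y$ produces the identity
\[
\frac{1}{p}\int_0^p v_y(x,y)\,e^{-in\omega y}\,dy = in\omega\,\hat v_n(x) + \mu(x), \qquad \mu(x) := \frac{v(x,p) - v(x,0)}{p},
\]
with a boundary correction $\mu$ that is independent of $n$. For $n \ne 0$ the $(2,2)$-entry equation $\hat e_{22,n} = in\omega\,\hat v_n + \alpha\,a_n + \mu$ lets me solve algebraically for $\hat v_n$ and, after differentiating in $x$, for $\hat v_n'$. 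Substituting into $\hat s_n = \tfrac12(in\omega\,a_n - \hat v_n')$ yields the modal master identity
\[
2in\omega\,\hat s_n = -n^2\omega^2\,a_n - \hat e_{22,n}' + \alpha\,a_n' + \mu'(x).
\]

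Testing this identity against $\overline{a_n}$ in $L^2(I_h)$, integrating by parts in $x$ to transfer the primes off $\hat e_{22,n}$ and $\mu$, and summing over $n \ne 0$ via Parseval, I would bound $\|u_y\|^2$ (and hence $\|s\|^2$) by three kinds of contributions: (i) pure $L^2$ inner-products of strain components with $u$ or $u_x = e_{11}$, which combine via Cauchy--Schwarz into $\|\Be_\alpha\|^2$; (ii) boundary traces on $\partial I_h = \{1 \pm h/2\}$, handled by the one-dimensional trace inequality $|f(1 \pm h/2)|^2 \le \tfrac{2}{h}\|f\|_{L^2(I_h)}^2 + 2\|f\|_{L^2(I_h)}\|f'\|_{L^2(I_h)}$, which is the source of the decisive factor $1/h$ in the $\|u\|/h$ term; and (iii) contributions of the correction $\mu$, controlled by the trace-type bound $\|\mu\|_{L^2(I_h)}^2 \le Cp^{-1}\|v_y\|^2 \le Cp^{-1}(\|\Be_\alpha\|^2 + \|u\|^2)$.

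I expect the principal obstacle to be the simultaneous handling of all boundary trace contributions from $a_n$ and $\hat e_{22,n}$ across Fourier modes: these have to be estimated by Young's inequality with a carefully tuned small parameter so that the $\|u_y\|^2$-pieces arising from the traces can be absorbed back into the left-hand side, leaving precisely the announced structure $\|\Be_\alpha\|(\|u\|/h + \|\Be_\alpha\|)$. A naive estimate would yield $\|u\|^2/h^2$ or $\|\Be_\alpha\|\|u_y\|$, neither of which matches the claim; a related delicacy is that the sum over modes of the $\mu'(x)$-contribution is a priori divergent, so it must be paired with the boundary trace of $a_n$ and summed by parts to exploit a cancellation.
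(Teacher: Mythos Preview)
Your reduction to bounding the skew part $s=\tfrac12(u_y-v_x)$ via $\|\BG_{\Ga}\|^{2}=\|\Be_{\Ga}\|^{2}+2\|s\|^{2}$ is correct, and Fourier analysis in $y$ is the right ingredient. But the specific execution---testing the modal identity against $\overline{a_n}$ and integrating by parts in $x$---has a genuine gap that your suggested remedies do not close.

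After integration by parts the boundary contribution is essentially
\[
\sum_{n\ne 0}\bigl[\hat e_{22,n}\,\overline{a_n}\bigr]_{1-h/2}^{1+h/2}
\;\approx\;\frac{1}{p}\int_{0}^{p}\bigl[e_{22}\,u\bigr]_{1-h/2}^{1+h/2}\,dy,
\]
and bounding this requires a trace of $e_{22}$ on the short sides $\{x=1\pm h/2\}$. Your one-dimensional trace inequality applied to $\hat e_{22,n}$ calls for $\|\hat e_{22,n}'\|$, and summing in $n$ produces $\|\partial_{x}e_{22}\|_{L^{2}(\GO)}$---a derivative of the strain, not controlled by $\|\Be_{\Ga}\|$. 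No choice of Young parameter helps; the term sits at the wrong regularity level. The $\mu'$ contribution has the same defect: since $\mu$ is the \emph{same} function of $x$ for every $n$, the sum $\sum_{n\ne 0}(\mu,\hat e_{11,n})_{L^{2}(I_{h})}$ enjoys no Parseval structure and is genuinely divergent for generic $e_{11}\in L^{2}$. The hoped-for cancellation with the boundary traces of $a_{n}$ does not materialise: both pieces diverge separately.

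The paper eliminates both obstacles in one stroke by \emph{harmonic projection} (the Kondratiev--Oleinik device). One solves $\GD w=0$ in $\GO$ with $w=u$ on $\dOm$, so that $u-w\in H^{1}_{0}(\GO)$. The distributional identity
\[
\GD(u-w)=\GD u=(e_{11}-e_{22})_{x}+2(e_{12})_{y}+\Ga\,e_{11}
\]
is then tested against $u-w$; because $u-w$ vanishes on all of $\dOm$, every boundary term drops out and one obtains directly $\|\Grad(u-w)\|\le C\|\Be_{\Ga}\|$, together with $\|u-w\|\le Ch\|\Be_{\Ga}\|$ by Poincar\'e across the thin direction. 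What remains is to prove
\[
\|w_{y}\|^{2}\le \frac{C}{h}\,\|w\|\,\|w_{x}\|+\|w_{x}\|^{2}
\]
for harmonic $w$ with $w(x,0)=w(x,p)$. This is where Fourier enters, but now for a harmonic function: separation of variables forces each $y$-mode $e^{in\omega y}$ to carry $x$-dependence $A_{n}e^{n\omega x}+B_{n}e^{-n\omega x}$, and an elementary Cauchy--Schwarz on the resulting coefficient sums yields the factor $1/h$ with no uncontrolled traces anywhere. The harmonic projection is precisely the missing idea: it converts your ill-behaved boundary terms at $x=1\pm h/2$ into the harmless Dirichlet data $w|_{\dOm}=u|_{\dOm}$.
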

We emphasize that there are no boundary conditions imposed on $v(x,y)$. If
$\Ga=0$, and $p=L$ then inequality (\ref{basicineq}) reduces to
(\ref{poltora}). However, the assumed \bc s in Lemma~\ref{lem:basicineq} and
Theorem~\ref{th:basicineq:improved} do not match. If $\Ga=1$, $p=\pi$, then the \bc s
in Lemma~\ref{lem:basicineq} and Theorem~ \ref{th:hard} are the same and
inequalities (\ref{basicineq}) and (\ref{crazy}) are similar, but not
identical. These small discrepancies will be rectified in the proof of the lemma.
\begin{proof}
Following the argument of Kondratiev and Oleinik in \cite{kool88}, one can assume, without
loss of generality, that $u$ is harmonic. Indeed, suppose $w(x,y)$ solves
\begin{equation}
  \label{harmon}
  \begin{cases}
  \GD w(x,y)=0,&(x,y)\in\GO\\
  w(x,y)=u(x,y),&(x,y)\in\dOm,
\end{cases}
\end{equation}
where $\GO=I_{h}\times[0,p]$. Then $\Grad w$ is the Helmholtz projection of
$\Grad u$ onto the space of divergence-free fields in
$L^{2}(\GO;\bb{R}^{2})$, and the following bounds hold:
\begin{lemma}
  \label{lem:main}
Let $\GO=I_{h}\times[0,p]$ and $\BGf=(u,v)\in H^{1}(\GO;\bb{R}^{2})$. If $w(x,y)$ is defined by (\ref{harmon}), then for
any $\Ga\in[-1,1]$, any $h\in(0,1)$, and any $p>0$
\begin{equation}
  \label{mainest}
  \|\Grad u-\Grad w\|\le\pi K_{0}\|\Be_{\Ga}\|,\qquad
\|u-w\|\le K_{0}h\|\Be_{\Ga}\|,\qquad
K_{0}=\frac{1}{\pi}\left(\sqrt{2}+\nth{\pi}\right).
\end{equation}
\end{lemma}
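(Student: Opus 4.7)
The plan is to follow the Kondratiev--Oleinik harmonic-projection method adapted to the perturbed symmetric gradient $\Be_\Ga$. Set $\psi = u-w$. By construction $w|_{\dOm}=u|_{\dOm}$, and the trace identity $u(x,0)=u(x,p)$ ensures compatibility of the Dirichlet data at the corners, so $\psi$ vanishes on all of $\dOm$. Since $\GD w=0$, integration by parts immediately gives $(\Grad w,\Grad\psi)_{L^2(\GO)}=0$, so the Pythagorean identity $\|\Grad u\|^2=\|\Grad w\|^2+\|\Grad\psi\|^2$ holds, and in particular
\[
\|\Grad\psi\|^2 \;=\; (\Grad u,\Grad\psi) \;=\; -\int_{\GO}\psi\,\GD u.
\]
Thus everything reduces to estimating this integral in terms of $\|\Be_\Ga\|$.

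The key step is an identity that expresses $\GD u$ through first derivatives of the entries of $\Be_\Ga$ plus a single lower-order linear term. From
\[
u_x=(\Be_\Ga)_{11},\qquad u_y=2(\Be_\Ga)_{12}-v_x,\qquad v_y=(\Be_\Ga)_{22}-\Ga u,
\]
differentiating the middle relation in $y$ and using $v_{xy}=\partial_x v_y$ to substitute from the third yields
\[
\GD u \;=\; \partial_x\bigl((\Be_\Ga)_{11}-(\Be_\Ga)_{22}\bigr) \;+\; 2\,\partial_y(\Be_\Ga)_{12} \;+\; \Ga\,u_x,
\]
an identity in which $v$ has been eliminated entirely.

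Integrating against $\psi$ and shifting derivatives off $\Be_\Ga$ by parts (boundary terms vanish since $\psi=0$ on $\dOm$) produces three contributions. For the first two, $\int\psi_x\bigl((\Be_\Ga)_{11}-(\Be_\Ga)_{22}\bigr)+2\int\psi_y(\Be_\Ga)_{12}$, Cauchy--Schwarz applied to the pairs $\bigl(\|\psi_x\|,\|\psi_y\|\bigr)$ and $\bigl(\|(\Be_\Ga)_{11}-(\Be_\Ga)_{22}\|,\,2\|(\Be_\Ga)_{12}\|\bigr)$, combined with the pointwise bound $\bigl((\Be_\Ga)_{11}-(\Be_\Ga)_{22}\bigr)^2+4(\Be_\Ga)_{12}^2\le 2|\Be_\Ga|^2$, gives the estimate $\sqrt{2}\,\|\Grad\psi\|\,\|\Be_\Ga\|$. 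For the third contribution $-\Ga\int\psi\,u_x$, I invoke the sharp one-dimensional Poincar\'e inequality in the thin direction: $\psi$ vanishes at both endpoints of $I_h$, so $\|\psi\|\le(h/\pi)\|\psi_x\|\le(h/\pi)\|\Grad\psi\|$, and since $\|u_x\|\le\|\Be_\Ga\|$ and $|\Ga|\le 1$, this term is at most $(h/\pi)\|\Grad\psi\|\|\Be_\Ga\|$. Summing and dividing by $\|\Grad\psi\|$, and using $h\le 1$, I obtain
\[
\|\Grad\psi\|\;\le\;\Bigl(\sqrt{2}+\frac{h}{\pi}\Bigr)\|\Be_\Ga\|\;\le\;\Bigl(\sqrt{2}+\frac{1}{\pi}\Bigr)\|\Be_\Ga\|\;=\;\pi K_0\,\|\Be_\Ga\|.
\]
One further application of the thin-direction Poincar\'e inequality yields $\|u-w\|=\|\psi\|\le(h/\pi)\,\pi K_0\,\|\Be_\Ga\|=K_0\,h\,\|\Be_\Ga\|$.

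The only non-routine step is locating the right identity for $\GD u$. The worrisome feature is the non-divergence term $\Ga u_x$, which at first glance threatens to destroy the constant; what saves the argument is that after integration by parts it is paired with $\psi$ rather than $\Grad\psi$, so the strip Poincar\'e constant $h/\pi\le1/\pi$ absorbs it cleanly and the final constant $K_0$ depends neither on $\Ga\in[-1,1]$ nor on $p$.
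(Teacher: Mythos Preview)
Your proof is correct and follows essentially the same route as the paper: the identity $\GD u=\partial_x(e^{\Ga}_{11}-e^{\Ga}_{22})+2\partial_y e^{\Ga}_{12}+\Ga e^{\Ga}_{11}$ (your $\Ga u_x$ is exactly $\Ga e^{\Ga}_{11}$), pairing with $\psi=u-w\in H^1_0(\GO)$, the pointwise bound $(e^{\Ga}_{11}-e^{\Ga}_{22})^2+4(e^{\Ga}_{12})^2\le 2|\Be_\Ga|^2$, and the thin-direction Poincar\'e inequality to handle the zeroth-order term. The remark about corner compatibility via $u(x,0)=u(x,p)$ is unnecessary here---the lemma as stated assumes only $\BGf\in H^1$, and the Dirichlet harmonic extension $w$ with $u-w\in H^1_0$ is well-defined on the Lipschitz rectangle without any such condition---but it does no harm.
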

\begin{proof} Using the idea that the Laplacian can be expressed in terms of
  partial derivatives of components of the symmetrized gradient \cite{kool88}
  we compute, using the fact that $w$ is harmonic,
\[
\GD(u-w)=\GD u=(e^{\Ga}_{11}-e^{\Ga}_{22})_{x}+2(e^{\Ga}_{12})_{y}+\Ga e^{\Ga}_{11},
\]
in the sense of distributions. Here $e^{\Ga}_{ij}$ denote the components of the matrix $\Be_{\Ga}$.
Then, since $u-w\in H_{0}^{1}(\GO)$, we have
\[
\|\Grad(u-w)\|^{2}=\int_{\GO}\{(e^{\Ga}_{11}-e^{\Ga}_{22})(u-w)_{x}+2e^{\Ga}_{12}(u-w)_{y}+\Ga
e^{\Ga }_{11}(w-u)\}dxdy.
\]
By the Cauchy-Schwarz inequality we get
\[
\|\Grad(u-w)\|^{2}\le\|\Be_{\Ga}\|(\sqrt{2}\|\Grad(u-w)\|+|\Ga|\|u-w\|).
\]
By the Poincar\'e inequality
\[
\int_{I_{h}}|u-w|^{2}dx\le\frac{h^{2}}{\pi^{2}}\int_{I_{h}}|(u-w)_{y}|^{2}dx.
\]
Hence,
\[
\|u-w\|\le\frac{h}{\pi}\|\Grad(u-w)\|,
\]
and (\ref{mainest}) follows.
\end{proof}

Next we prove a Korn-like inequality for harmonic functions.
\begin{lemma}
  \label{lem:harmon}
Suppose $w\in H^{1}(I_{h}\times[0,p])$ is harmonic and satisfies
$w(x,0)=w(x,p)$ in the sense of traces. Then
\begin{equation}
  \label{hi}
\|w_{y}\|^{2}\le\frac{2\sqrt{3}}{h}\|w\|\|w_{x}\|+\|w_{x}\|^{2}.
\end{equation}
\end{lemma}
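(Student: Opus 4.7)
The natural approach is to Fourier decompose $w$ in $y$: extend $w$ to a smooth $p$-periodic function on $I_h\times\bb{R}$ (the boundary identification $w(x,0)=w(x,p)$ together with harmonicity produces no jump in $w_y$ across the seam, by elliptic regularity), and write
\[
w(x,y)=\sum_{n\in\bb{Z}}a_n(x)e^{ik_ny},\qquad k_n=\frac{2\pi n}{p},
\]
with $a_n\in C^\infty(I_h)$ satisfying $a_n''=k_n^2 a_n$. Parseval's identity yields
\[
\|w\|^2=p\sum_n\|a_n\|^2,\qquad \|w_x\|^2=p\sum_n\|a_n'\|^2,\qquad \|w_y\|^2=p\sum_{n\ne 0}k_n^2\|a_n\|^2,
\]
with all norms on the right taken in $L^2(I_h)$. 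After applying Cauchy--Schwarz to $\sum_{n\ne 0}\|a_n\|\,\|a_n'\|$, inequality (\ref{hi}) follows from the per-mode estimate
\[
k_n^2\|a_n\|^2\le\frac{2\sqrt 3}{h}\|a_n\|\|a_n'\|+\|a_n'\|^2\qquad(n\ne 0).
\]

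The key structural observation is a pointwise conservation law: differentiating $|a_n'|^2-k_n^2|a_n|^2$ and substituting $a_n''=k_n^2 a_n$ gives zero, so this quantity is identically constant on $I_h$. Consequently $\|a_n'\|^2-k_n^2\|a_n\|^2$ equals $h$ times this constant, and the per-mode inequality is trivial whenever that constant is non-negative. In the remaining case, substitute $\xi=k_n(x-1)$, $t=k_nh/2$, and write $f(\xi)=a_n(x)=\alpha\cosh\xi+\beta\sinh\xi$ on $[-t,t]$. A direct integration, using that $\xi\mapsto\sinh\xi\cosh\xi$ is odd on $[-t,t]$, yields
\[
\|f\|^2+\|f'\|^2=S\sinh(2t),\qquad \|f\|^2-\|f'\|^2=2tD,
\]
where $S=|\alpha|^2+|\beta|^2$ and $D=|\alpha|^2-|\beta|^2$ satisfy $S\ge|D|$. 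After rescaling, the per-mode inequality takes the form $\|f\|^2-\|f'\|^2\le\frac{\sqrt 3}{t}\|f\|\|f'\|$.

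Squaring this and using the identity $4\|f\|^2\|f'\|^2=(\|f\|^2+\|f'\|^2)^2-(\|f\|^2-\|f'\|^2)^2$, the task reduces, via $S\ge|D|$, to the purely scalar inequality $\sinh^2(2t)\ge 4t^2+\tfrac{16}{3}t^4$, equivalently
\[
g(s):=\sinh^2 s-s^2-\tfrac{s^4}{3}\ge 0\qquad\text{for }s\ge 0.
\]
Since $g(0)=g'(0)=g''(0)=g'''(0)=0$ and $g^{(4)}(s)=8(\cosh(2s)-1)\ge 0$, four successive integrations from $0$ propagate non-negativity down to $g$, closing the argument. The main conceptual step, and the only one that is not a routine computation, is the recognition of the conserved quantity $|a_n'|^2-k_n^2|a_n|^2$; after that, the per-mode estimate is fully explicit, with the hyperbolic inequality above as its one remaining technical input.
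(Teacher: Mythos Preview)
Your argument follows the paper's: Fourier-expand in $y$, represent each coefficient via $e^{\pm 2\pi nx/p}$, and reduce everything to the scalar inequality $\sinh^2 s\ge s^2+s^4/3$ (equivalently, the paper's $\Psi(\tau)=\tau^4/(\sinh^2\tau-\tau^2)\le 3$). The organization differs only cosmetically---you pass to a per-mode estimate first via Cauchy--Schwarz, while the paper carries the full sum and applies Cauchy--Schwarz at the end---but the substance is identical. Your conservation-law observation and the fourth-derivative verification of the hyperbolic bound are clean additions; the paper merely asserts that $\Psi$ is monotone.

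There is, however, a genuine error in your opening sentence: the claim that ``harmonicity produces no jump in $w_y$ across the seam, by elliptic regularity'' is false. Take $w(x,y)=e^{\pi x/p}\sin(\pi y/p)$: it is harmonic on $I_h\times[0,p]$ with $w(x,0)=w(x,p)=0$, yet $w_y(x,0)=-w_y(x,p)\ne 0$, so the periodic extension is not harmonic across the seam. For such $w$ the mode equation acquires a source term, $a_n''=k_n^2a_n-\tfrac{1}{p}\bigl(w_y(\cdot,p)-w_y(\cdot,0)\bigr)$, and your conserved quantity is no longer constant. Your Parseval identities for $\|w\|$, $\|w_x\|$, $\|w_y\|$ do survive (the boundary term in the $w_y$ computation vanishes thanks to the trace hypothesis), but the per-mode ODE does not. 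The paper glosses over exactly the same issue---it simply writes down the separated expansion without justification---so your argument is no worse than the original here; but do not present the step as a consequence of elliptic regularity, since that reasoning is incorrect.
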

\begin{proof}
By the method of separation of variables
\[
w(x,y)=\sum_{n\in\mathbb Z}(A_{n}e^{\frac{2\pi nx}{p}}+B_{n}e^{-\frac{2\pi nx}{p}})
e^{\frac{2\pi n yi}{p}}
\]
in $H^{1}(I_{h}\times[0,p])$.
Therefore,
\[
\|w\|^{2}=ph\sum_{n\in\mathbb Z}\left\{\psi\left(\frac{2\pi nh}{p}\right)
\left(|A_{n}|^{2}e^{\frac{2\pi nh}{p}}+|B_{n}|^{2}e^{\frac{-2\pi nh}{p}}\right)
+2\re(A_{n}\bra{B_{n}})\right\},\quad\psi(x)=\frac{\sinh(x)}{x}.
\]
In the expansions of $w_{x}$ and $w_{y}$ we simply replace $A_{n}$ and $B_{n}$
with $2\pi n A_{n}/p$, $-2\pi n B_{n}/p$ and $2\pi in A_{n}/p$, $2\pi in
B_{n}/p$, respectively:
\[
\|w_{x}\|^{2}=4ph\sum_{n\in\mathbb Z}\frac{\pi^{2}n^{2}}{p^{2}}
\left\{\psi\left(\frac{2\pi nh}{p}\right)
\left(|A_{n}|^{2}e^{\frac{2\pi nh}{p}}+|B_{n}|^{2}e^{\frac{-2\pi nh}{p}}\right)
-2\re(A_{n}\bra{B_{n}})\right\},
\]
\[
\|w_{y}\|^{2}=4ph\sum_{n\in\mathbb Z}\frac{\pi^{2}n^{2}}{p^{2}}
\left\{\psi\left(\frac{2\pi nh}{p}\right)
\left(|A_{n}|^{2}e^{\frac{2\pi nh}{p}}+|B_{n}|^{2}e^{\frac{-2\pi nh}{p}}\right)
+2\re(A_{n}\bra{B_{n}})\right\},
\]
Denoting
\[
a_{n}=A_{n}e^{\frac{\pi nh}{p}},\qquad b_{n}=B_{n}e^{-\frac{\pi nh}{p}},\qquad\tau_{n}=\frac{2\pi nh}{p}
\]
we simplify the above expressions:
\[
\frac{\|w\|^{2}}{h^{2}}=4ph\sum_{n\in\mathbb Z}\frac{\pi^{2}n^{2}}{\tau_{n}^{2}p^{2}}
\{(\psi(\tau_{n})-1)(|a_{n}|^{2}+|b_{n}|^{2})+|a_{n}+b_{n}|^{2}\},
\]
\[
\|w_{y}\|^{2}=4ph\sum_{n\in\mathbb Z}\frac{\pi^{2}n^{2}}{p^{2}}\{(\psi(\tau_{n})-1)
(|a_{n}|^{2}+|b_{n}^{2}|)+|a_{n}+b_{n}|^{2}\},
\]
\[
\|w_{x}\|^{2}=4ph \sum_{n\in\mathbb Z}\frac{\pi^{2}n^{2}}{p^{2}}\{(\psi(\tau_{n})-1)
(|a_{n}|^{2}+|b_{n}|^{2})+|a_{n}-b_{n}|^{2}\},
\]
Obviously,
\[
\|w_{y}\|^{2}-\|w_{x}\|^{2}=16ph\sum_{n\in\mathbb Z}\frac{\pi^{2}n^{2}}{p^{2}}
\re(a_{n}\bra{b_{n}})\le16ph\sum_{n\in\CP}\frac{\pi^{2}n^{2}}{p^{2}}\re(a_{n}\bra{b_{n}}),
\]
where $\CP=\{n\in\bb{Z}:\re(a_{n}\bra{b_{n}})>0\}$.
Next we estimate
\begin{align*}
\frac{\|w\|^{2}}{h^{2}}&\ge4ph\sum_{n\in\CP}\frac{\pi^{2}n^{2}}{\tau_{n}^{2}p^{2}}
\{(\psi(\tau_{n})-1)(|a_{n}|^{2}+|b_{n}|^{2})+|a_{n}+b_{n}|^{2}\}\\
&\ge 8ph\sum_{n\in\CP}\frac{\pi^{2}n^{2}}{\tau_{n}^{2}p^{2}}(\psi(\tau_{n})+1)\re(a_{n}\bra{b_{n}}).
\end{align*}
Similarly,
\[
\|w_{x}\|^{2}\ge 8ph\sum_{n\in\CP}\frac{\pi^{2}n^{2}}{p^{2}}
(\psi(\tau_{n})-1)\re(a_{n}\bra{b_{n}}).
\]
Now we have
\[
\sum_{n\in\CP}\frac{\pi^{2}n^{2}}{p^{2}}\re(a_{n}\bra{b_{n}})=
\sum_{n\in\CP}\left(\frac{\pi n}{p}\sqrt{(\psi(\tau_{n})-1)\re(a_{n}\bra{b_{n}})}\right)
\left(\frac{\pi n}{p}\sqrt{\frac{\re(a_{n}\bra{b_{n}})}{\psi(\tau_{n})-1}}\right).
\]
Applying the Cauchy-Schwarz inequality we obtain
\[
  \sum_{n\in\CP}\frac{\pi^{2}n^{2}}{p^{2}}\re(a_{n}\bra{b_{n}})\le
\sqrt{\sum_{n\in\CP}\frac{\pi^{2}n^{2}}{p^{2}}
(\psi(\tau_{n})-1)\re(a_{n}\bra{b_{n}})}
\sqrt{\sum_{n\in\CP}\Psi(\tau_{n})
\frac{\pi^{2}n^{2}}{\tau_{n}^{2}p^{2}}(\psi(\tau_{n})+1)\re(a_{n}\bra{b_{n}})},
\]
where
\[
\Psi(\tau)=\frac{\tau^{2}}{\psi(\tau)^{2}-1}=\frac{\tau^{4}}{\sinh^{2}(\tau)-\tau^{2}}.
\]
The function $\Psi(\tau)$ is monotone decreasing on $(0,+\infty)$, and hence,
\[\Psi(\tau_{n})\le\Psi(\tau_{1})\le\Psi(0)=3.\] Therefore,
\begin{equation}
  \label{shi}
\|w_{y}\|^{2}-\|w_{x}\|^{2}\le\frac{2\sqrt{\Psi(\tau_{1})}}{h}\|w\|\|w_{x}\|,
\end{equation}
and inequality (\ref{hi}) follows.
\end{proof}
We remark that inequality (\ref{hi}) is sharp, since
\[
w(x,y)=\cosh\left(\frac{\pi}{p}\left(x-\frac{h}{2}\right)\right)
\sin\left(\frac{\pi y}{p}\right)
\]
turns the inequality (\ref{shi}) into equality.

We can now finish the proof of Lemma~\ref{lem:basicineq}.
By the triangle inequality and Lemma~\ref{lem:main} we get
\begin{multline*}
  \|\BG_{\Ga}\|^{2}=\|\Be_{\Ga}\|^{2}+\hf\|v_{x}-\phi_{y}\|^{2}=\|\Be_{\Ga}\|^{2}+
\hf\|(v_{x}+\phi_{y})-2(\phi_{y}-w_{y})-2w_{y}\|^{2}\le\\
\|\Be_{\Ga}\|^{2}+\frac{3}{2}\|\phi_{y}+v_{x}\|^{2}+6\|\phi_{y}-w_{y}\|^{2}+6\|w_{y}\|^{2}\le
(4+6\pi^{2}K_{0}^{2})\|\Be_{\Ga}\|^{2}+6\|w_{y}\|^{2}.
\end{multline*}
We estimate $\|w_{y}\|$ by means of Lemma~\ref{lem:harmon}.
By the triangle inequality and Lemma~\ref{lem:main}
\[
\|w\|\le\|u\|+\|u-w\|\le\|u\|+K_{0}h\|\Be_{\Ga}\|,
\]
\[
\|w_{x}\|\le\|\phi_{x}\|+\|w_{x}-\phi_{x}\|\le(1+\pi K_{0})\|\Be_{\Ga}\|.
\]
Therefore,
\[
\|w_{y}\|^{2}\le\frac{2\sqrt{3}(1+\pi K_{0})}{h}\|u\|\|\Be_{\Ga}\|+
(1+\pi K_{0})(1+(2\sqrt{3}+\pi)K_{0})\|\Be_{\Ga}\|^{2}.
\]
Thus, using somewhat arbitrary integer overestimation, we obtain
\begin{equation}
  \label{zbc}
\|\BG_{\Ga}\|^{2}\le100\|\Be_{\Ga}\|\left(\frac{\|u\|}{h}+\|\Be_{\Ga}\|\right).
\end{equation}
The proof of Lemma~\ref{lem:basicineq} is complete.
\end{proof}

\textbf{Proof of Theorem~\ref{th:basicineq:improved}.}
Theorem~\ref{th:basicineq:improved} follows from Lemma~\ref{lem:basicineq} via
the even-odd extension method, whereby we define the new displacement $\Tld{\BGf}=(\tilde{u},\tilde{v},)$
on the rectangle $I_{h}\times[-p, p],$ where $\tilde{u}$ and $\tilde{v}$ are extensions of $u$ and $v$ such that
$$
\tilde{u}(x,y)=
\begin{cases}
u(x,y) &\ \text{if} \ \ y\in[0, p]\\
u(x,-y) &\ \text{if} \ \ y\in[-p, 0]\\
\end{cases}\qquad
\tilde{v}(x,y)=
\begin{cases}
v(x,y) &\ \text{if} \ \ y\in[0, p]\\
-v(x,-y) &\ \text{if} \ \ y\in[-p, 0]\\
\end{cases}.
$$
We observe that due to the \bc\ $v(x,0)=0$, the extension $\Tld{\BGf}$ is an
$H^{1}(I_{h}\times[-p,p])$ vector field, while $\tilde{u}(x,-p)=\tilde{u}(x,p)$. Moreover,
$$
\nabla\Tld{\BGf}(x,y)=
\begin{cases}
\begin{bmatrix}
u_x(x,y) & u_y(x,y)\\
v_x(x,y) & v_y(x,y)
\end{bmatrix} & \ \text{if}\ \  y\in[0, p],\\[3ex]
\begin{bmatrix}
u_x(x,-y) & -u_y(x,-y)\\
-v_x(x,-y) & v_y(x,-y)
\end{bmatrix} & \ \text{if} \ \ y\in[-p, 0].\\
\end{cases}
$$
Therefore, setting $\Tld{\Omega}=I_{h}\times[-p, p]$ we get,

$$\|\nabla\Tld{\BGf}\|_{L^2(\Tld{\Omega})}^2=2\|\nabla\BGf\|_{L^2(\Omega)}^2,\qquad
\|e(\Tld{\BGf})\|_{L^2(\Tld{\Omega})}^2=2\|e(\BGf)\|_{L^2(\Omega)}^2.$$
It is also clear that
$\|\Tld{u}\|_{L^2(\Tld{\Omega})}^2=2\|u\|_{L^2(\Omega)}^2$. An
application of
Lemma~\ref{lem:basicineq} to the vector field $\Tld{\BGf}$ in the domain
$\Tld{\Omega}$ completes the proof.

\textbf{Proof of Theorem~\ref{th:hard}.} Let
$\Tld{\BGf}(x,y)=(u(x,y),(1-x)v(x,y))$, and let
\[
\Tld{\BG}=\BG_{\Ga}(\Tld{\BGf})|_{\Ga=1},\qquad\Tld{\Be}=\hf(\Tld{\BG}+\Tld{\BG}^{T}).
\]
We compute
\[
\BG_{*}=\Tld{\BG}+\mat{0}{-v}{v+xv_{x}}{xv_{y}},\qquad
\Tld{\Be}=\Be_{*}+\mat{0}{-\dfrac{x}{2}v_{x}}{-\dfrac{x}{2}v_{x}}{-xv_{y}}.
\]
Thus we immediately obtain that
\begin{equation}
  \label{Gstar}
  \|\BG_{*}\|^{2}\le6(\|\Tld{\BG}\|^{2}+\|v\|^{2}+h^{2}(\|v_{x}\|^{2}+\|v_{y}\|^{2}).
\end{equation}
and
\begin{equation}
  \label{eest}
\|\Tld{\Be}\|\le\|\Be_{*}\|+h(\|v_{x}\|+\|v_{y}\|).
\end{equation}
We also estimate
\begin{equation}
  \label{gradv}
\|v_{x}\|\le\|\BG_{*}\|,\qquad\|v_{y}\|\le\|v_{y}+u\|+\|u\|\le\|\Be_{*}\|+\|u\|.
\end{equation}
Now we apply Lemma~\ref{lem:basicineq} to the vector field $\Tld{\BGf}$ and
$\Ga=1$, and obtain
\[
\|\Tld{\BG}\|^{2}\le
C_{0}\|\Tld{\Be}\|\left(\frac{\|u\|}{h}+\|\Tld{\Be}\|\right).
\]
Therefore, by (\ref{Gstar}) and (\ref{eest}) we obtain
\[
\|\BG_{*}\|^{2}\le C_{0}\left(\|\Be_{*}\|^{2}+\|\Be_{*}\|\frac{\|u\|}{h}+
\|u\|(\|v_{x}\|+\|v_{y}\|)+\|v\|^{2}+h^{2}(\|v_{x}\|^{2}+\|v_{y}\|^{2})\right).
\]
Applying inequalities (\ref{gradv}) to the terms containing $\|v_{x}\|$
and $\|v_{y}\|$ we obtain
\[
\|\BG_{*}\|^{2}\le C_{0}\left(\|\Be_{*}\|^{2}+\|\Be_{*}\|\frac{\|u\|}{h}+
\|u\|\|\BG_{*}\|+\|u\|^{2}+\|v\|^{2}+h^{2}\|\BG_{*}\|^{2}\right).
\]
When $h^{2}<1/(2C_{0})$ we get the inequality
\[
\|\BG_{*}\|^{2}\le C_{0}\left(\|\Be_{*}\|^{2}+\|\Be_{*}\|\frac{\|u\|}{h}+
\|u\|\|\BG_{*}\|+\|u\|^{2}+\|v\|^{2}\right).
\]
We also have
\[
C_{0}\|u\|\|\BG_{*}\|\le\hf\|\BG_{*}\|^{2}+\frac{C_{0}^{2}}{2}\|u\|^{2}.
\]
Thus, we obtain
\begin{equation}
  \label{penult}
\|\BG_{*}\|^{2}\le C_{0}\left(\|\Be_{*}\|^{2}+\|\Be_{*}\|\frac{\|u\|}{h}+\|u\|^{2}+\|v\|^{2}\right).
\end{equation}
To finish the proof of the theorem we write $\|u\|^{2}$ using integration by
parts and periodic \bc s:
\[
\|u\|^{2}=(u,u+v_{y})+(u_{y}-v,v)+\|v\|^{2}.
\]
Thus,
\[
\|u\|^{2}\le\|u\|\|\Be_{*}\|+\|\BG_{*}\|\|v\|+\|v\|^{2},
\]
and using $2\|u\|\|\Be_{*}\|\le\|u\|^{2}+\|\Be_{*}\|^{2}$ we obtain (\ref{uest}).
Applying this inequality to the $\|u\|^{2}$ term in (\ref{penult}) we obtain
\[
\|\BG_{*}\|^{2}\le C_{0}\left(\|\Be_{*}\|^{2}+\|\Be_{*}\|\frac{\|u\|}{h}+\|\BG_{*}\|\|v\|+
\|v\|^{2}\right).
\]
from which Theorem~\ref{th:hard} follows.
\begin{remark}
\label{rem:imp}
  In the proofs of all of the Korn and Korn-like inequalities, the
  vanishing of $\phi_{z}(r,\Gth,L)$ was never used. Hence,
  \begin{equation}
    \label{KIplus}
    c(L)h^{3/2}\le K(V^{*}_{h})\le C(L)h^{3/2},
  \end{equation}
where
\[
V^{*}_{h}=\{\BGf\in W^{1,2}(\CC_{h};\mathbb R^3):\phi_{\Gth}(r,\Gth,0)=\phi_{z}(r,\Gth,0)=
\phi_{\Gth}(r,\Gth,L)=0\}.
\]
\end{remark}

\section{Korn inequality for gradient components}
\setcounter{equation}{0}
\label{sec:gradcomp}
The goal in this section is to prove Korn-like inequalities (\ref{thetaz})
and (\ref{rz}) for gradient components. While inequalities
(\ref{KI:grad.compts})--(\ref{rz}) bear a formal resemblance to the Korn
inequality (\ref{KI:lower}), the distinct scaling exponents in
(\ref{thetaz})--(\ref{rz}) are a consequence of the high degree of metric
symmetry in the structure. By contrast, our methods in Sections~\ref{sec:red} and
\ref{sec:2d} exploited only the topological and smooth structures of the
cylindrical shell. Not surprisingly, then, the proof of (\ref{thetaz}) and
(\ref{rz}) is based on exact calculations in Fourier space, rather than on
various integral inequalities, as in the proof of (\ref{KI:lower}). In fact,
the natural periodicity in $\Gth$ is not sufficient, and we need the
periodicity in $z$ variable as well. The boundary conditions in $V_{h}^{1}$
and $V_{h}^{2}$ permit us to achieve this goal in the same way as was done in
proof of Theorem~\ref{th:basicineq:improved} in Section~\ref{sec:2d}.  For
$V_{h}^{1}$ we extend $\phi_{r}$ and $\phi_{\Gth}$ as odd and $\phi_{z}$ as an
even function in $z\in[-L,L]$, while for $V_{h}^{2}$ we extend $\phi_{r}$ and
$\phi_{\Gth}$ as even functions as $\phi_{z}$ as odd. We remark that the
periodic extension method cannot be applied to the \bc s in the definition of
space $V_{h}^{*}$. To fix ideas we conduct the proof for the space
$V_{h}^{1}$. The proof for $V_{h}^{2}$ is obtained by switching the sine and
cosine series in the $z$ variable. Denoting the periodic extensions without
relabeling, we expand the vector field $\BGf(r,\Gth,z)$ in Fourier series in $(\Gth,z)$:
  \begin{equation}
    \label{Fourier}
\BGf(r,\Gth,z)=\sum_{m=0}^{\infty}\sum_{n\in\mathbb Z}\BGf^{(m,n)}(r,\Gth,z),
  \end{equation}
where
\[
\begin{cases}
  \phi_{r}^{(m,n)}=
\Hat{\phi}_{r}(r;m,n)\sin\left(\dfrac{\pi mz}{L}\right)e^{in\Gth},&
\Hat{\phi}_{r}(r;m,n)=\nth{\pi L}\int_{0}^{2\pi}\int_{0}^{L}
\phi_{r}\sin\left(\dfrac{\pi mz}{L}\right)e^{in\Gth}dzd\Gth\\[3ex]
\phi_{\Gth}^{(m,n)}=
\Hat{\phi}_{\theta}(r;m,n)\sin\left(\dfrac{\pi mz}{L}\right)e^{in\Gth},&
\Hat{\phi}_{\theta}(r;m,n)=\nth{\pi L}\int_{0}^{2\pi}\int_{0}^{L}
\phi_{\Gth}\sin\left(\dfrac{\pi mz}{L}\right)e^{in\Gth}dzd\Gth\\[3ex]
\phi_{z}^{(m,n)}=
\Hat{\phi}_{z}(r;m,n)\cos\left(\dfrac{\pi mz}{L}\right)e^{in\Gth},&
\Hat{\phi}_{z}(r;m,n)=\nth{\pi L}\int_{0}^{2\pi}\int_{0}^{L}
\phi_{z}\cos\left(\dfrac{\pi mz}{L}\right)e^{in\Gth}dzd\Gth.
\end{cases}
\]
We observe that in cylindrical coordinates
\[
\Grad\BGf(r,\Gth,-z)=-\left[
  \begin{array}{rrr}
    -\phi_{r,r}(r,\theta,z) & -\frac{\phi_{r,\theta}(r,\theta,z)-\phi_{\theta}(r,\theta,z)}{r} & \phi_{r,z}(r,\theta,z)\\
        -\phi_{\theta,r}(r,\theta,z) & -\frac{\phi_{\theta,\theta}(r,\theta,z)+\phi_{r}(r,\theta,z)}{r} & \phi_{\theta,z}(r,\theta,z)\\
    \phi_{z,r}(r,\theta,z) & \frac{\phi_{z,\theta}(r,\theta,z)}{r} & -\phi_{z,z}(r,\theta,z)\\
\end{array}
\right]
\]
Therefore, it is sufficient to prove inequalities (\ref{thetaz}) and
(\ref{rz}) for functions of the form
\[
\BGf^{(m,n)}(r,\Gth,z)=\left(f_{r}(r)\sin\left(\dfrac{\pi mz}{L}\right),
f_{\Gth}(r)\sin\left(\dfrac{\pi mz}{L}\right),f_{z}(r)\cos\left(\dfrac{\pi mz}{L}\right)
\right)e^{in\Gth}.
\]
Indeed,
\[
\|\phi_{r,z}\|^{2}=\pi L\sum_{m=1}^{\infty}\sum_{n\in\bb{Z}}\|\phi^{(m,n)}_{r,z}\|^{2}\le
\pi L\sum_{m=0}^{\infty}\sum_{n\in\bb{Z}}\frac{C(L)}{h}\|e(\BGf^{(m,n)})\|^{2}=
\frac{C(L)}{h}\|e(\BGf)\|^{2},
\]
with the similar bound for $\|\phi_{\Gth,z}\|$.
Observe that $\BGf^{(m,n)}\in V_{h}^{1}$ or $V_{h}^{2}$, provided $\BGf\in
V_{h}^{1}$ or $V_{h}^{2}$, respectively. Therefore, Theorem~\ref{th:KI} and
Corollary~\ref{cor:KSI} are applicable to such functions. We now fix $m\ge 1$ and
$n\in\bb{Z}$, and for simplicity of notation we use $\BGf=(\phi_{r},\phi_{\Gth},\phi_{z})$
instead of $\BGf^{(m,n)}=(\phi_{r}^{(m,n)},\phi_{\Gth}^{(m,n)},\phi_{z}^{(m,n)})$.
Notice that if $\|\phi_{r}\|\le 3\|e(\BGf)\|$, then Corollary~\ref{cor:KSI}
 implies that
\[
\|\phi_{r,z}\|^{2}\le \|\Grad\BGf\|^{2}\le\frac{C(L)}{h}\|e(\BGf)\|^{2},
\]
and (\ref{rz}) is proved. Let us now prove inequality (\ref{rz}) under the
assumption that $\|\phi_{r}\|>3\|e(\BGf)\|$. In that case inequalities
(\ref{G23}) and (\ref{KSI.gradient}) become
\begin{equation}
  \label{rtheta1}
\|\phi_{z,\Gth}\|^{2}+\|\phi_{\Gth,z}\|^{2}\le \frac{8}{3}\|e(\BGf)\|\|\phi_{r}\|
\end{equation}
and
\begin{equation}
  \label{preK1}
  \|\Grad\BGf\|^{2}\le \frac{C(L)}{h}\|e(\BGf)\|\|\phi_{r}\|,
\end{equation}
respectively.
We estimate
\[
n^2\|\phi_r\|^2=\|\phi_{r,\theta}\|^2\leq 2\|\phi_{r,\theta}-\phi_\theta\|^2+2\|\phi_\theta\|^2
\leq 2\|\nabla \BGf\|^2+\frac{2L^{2}}{\pi^{2}}\|\phi_{\theta,z}\|^2\le
C(L)\|\nabla \BGf\|^2,
\]
where the Poincar\'e inequality has been used for $\phi_{\Gth}$.
Applying inequality (\ref{preK1}) we obtain
\[
n^2\|\phi_r\|^{2}\leq C(L)\|\nabla\BGf\|^2\le\frac{C(L)}{h}\|e(\BGf)\|\|\phi_r\|.
\]
Thus,
\begin{equation}
\label{n^2urleqe(u)}
n^2\|\phi_r\|\le\frac{C(L)}{h}\|e(\BGf)\|.
\end{equation}
We next estimate
\[
\|\phi_r\|^2\leq 2\|\phi_r+\phi_{\theta,\theta}\|^2+2\|\phi_{\theta,\theta}\|^2
\leq 2\|e(\BGf)\|^2+2n^2\|\phi_\theta\|^2,
\]
and
\[
\frac{m^2\pi^2}{L^2}\|\phi_\theta\|^2=\|\phi_{\theta,z}\|^2\leq
\frac{8}{3}\|e(\BGf)\|\|\phi_r\|,
\]
due to (\ref{rtheta1}). Combining the last two inequalities we obtain
\begin{equation}
  \label{vrineq1}
\|\phi_r\|^2\leq 2\|e(\BGf)\|^2+\frac{16L^{2}n^2}{3m^2\pi^2}\|e(\BGf)\|\|\phi_r\|.
\end{equation}
By our assumption $\|e(\BGf)\|^{2}<\|\phi_{r}\|^{2}/9$. We use this
inequality to estimate the first term on the \rhs\ of (\ref{vrineq1}) and obtain
\begin{equation}
\label{u_rleqe(u)2}
\|\phi_r\|\leq\frac{12L^{2}n^2}{m^2\pi^2}\|e(\BGf)\|.
\end{equation}
Finally, multiplying (\ref{n^2urleqe(u)}) and (\ref{u_rleqe(u)2}) we get
\[
m^2\|\phi_r\|^2\leq \frac{C(L)}{h}\|e(\BGf)\|^2,
\]
and (\ref{rz}) is proved. To prove (\ref{thetaz}) we utilize (\ref{G23}) to get,
\begin{equation}
\label{thetaz<symr}
\|\phi_{\Gth,z}\|^2\leq G_{12}^{2}\leq 2\|\BA_{\rm sym}\|(\|\BA_{\rm sym}\|+\|\phi_r\|).
\end{equation}
Choosing $\Ge=\sqrt[4]{h}$ in (\ref{bound.on.u_r}) and applying (\ref{KornA})
to the resulting inequality, we obtain
$$\|\phi_r\|\leq \frac{C(L)\|\BA_{\rm sym}\|}{\sqrt h}.$$
Substituting now the last inequality into (\ref{thetaz<symr}) we get
$$\|\phi_{\Gth,z}\|^\leq \frac{C(L)}{\sqrt h}\|\BA_{\rm sym}\|^2.$$
Invoking inequality (\ref{e(phi)A_sym}), gives
$$\|\phi_{\Gth,z}\|^2\leq \frac{C(L)}{\sqrt h}\|e(\BGf)\|^2,$$
for sufficiently small $h$.
This completes the proof for the case $\BGf\in V_h^1$. If $\BGf\in
V_h^2$ we repeat the same proof changing sines to cosines in the expansion (\ref{Fourier}).

\medskip

\noindent\textbf{Acknowledgments.}
This material is based upon work supported by the National Science
Foundation under Grants No. 1008092.

\bibliographystyle{abbrv}
\bibliography{refs}

\end{document}